\journalname{JOTA}
\newtheorem{assume}{Assumption}
\newcommand{\R}{\mathbb{R}}
\newcommand{\E}{\mathbb{E}}
\newcommand{\N}{\mathbb{N}}
\newcommand{\calI}{\mathcal{I}}
\newcommand{\x}{\boldsymbol{x}}
\newcommand{\y}{\boldsymbol{y}}
\newcommand{\z}{\boldsymbol{z}}
\newcommand{\calC}{\mathcal{C}}
\newcommand{\calP}{\mathcal{P}}
\newcommand{\whV}{\widehat{\bs{V}}}
\newcommand{\balpha}{\boldsymbol{\alpha}}
\newcommand{\bbH}{\mathbb{H}}
\newcommand{\calB}{\mathcal{B}}
\newcommand{\calWB}{\widetilde{\mathcal{B}}}
\newcommand{\orcid}[1]{\href{https://orcid.org/#1}{\includegraphics[scale=1]{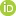}}}
\newcommand{\bs}[1]{\boldsymbol{#1}}
\DeclareMathOperator*{\argmin}{argmin}
\DeclareMathOperator*{\prox}{prox}
\DeclareMathOperator*{\proj}{P}
\def\fundingname{Funding}
\newenvironment{myblock}[1]{\par\addvspace{17pt}\small\rmfamily
\trivlist\item[\hskip\labelsep
{\bfseries#1}]}{}
\begin{document} 
\title{A Boosted-DCA with Power-Sum-DC Decomposition for Linearly Constrained Polynomial Programs\thanks{Accepted by the Journal of Optimization Theory and Applications February 15, 2024. \\Communicated by Alper Yildirim.}}

\titlerunning{A Boosted-DCA with Power-Sum-DC Decomposition for Polynomial Programs}

\author{Hu Zhang \and Yi-Shuai Niu \orcid{0000-0002-9993-3681}}
\authorrunning{Hu Zhang \and Yi-Shuai Niu} 

\institute{
		Hu Zhang  \at
		School of Mathematical Sciences, Shanghai Jiao Tong University, China \\
		\and
		Yi-Shuai Niu \at
		Beijing Institute of Mathematical Sciences and Applications, Beijing, China\\
		niuyishuai@bimsa.cn; niuyishuai82@hotmail.com
}
\date{Received: date / Accepted: date}
\maketitle

\begin{abstract}
This paper proposes a novel Difference-of-Convex (DC) decomposition for polynomials using a power-sum representation, achieved by solving a sparse linear system. We introduce the Boosted DCA with Exact Line Search (BDCA$_\text{e}$) for addressing linearly constrained polynomial programs within the DC framework. Notably, we demonstrate that the exact line search equates to determining the roots of a univariate polynomial in an interval, with coefficients being computed explicitly based on the power-sum DC decompositions. The subsequential convergence of BDCA$_\text{e}$ to critical points is proven, and its convergence rate under the Kurdyka-{\L}ojasiewicz property is established. To efficiently tackle the convex subproblems, we integrate the Fast Dual Proximal Gradient (FDPG) method by exploiting the separable block structure of the power-sum DC decompositions. We validate our approach through numerical experiments on the Mean-Variance-Skewness-Kurtosis (MVSK) portfolio optimization model and box-constrained polynomial optimization problems. Comparative analysis of BDCA$_\text{e}$ against DCA, BDCA with Armijo line search, UDCA, and UBDCA with projective DC decomposition, alongside standard nonlinear optimization solvers \texttt{FMINCON} and \texttt{FILTERSD}, substantiates the efficiency of our proposed approach.
\end{abstract}
\keywords{Power-sum DC decomposition \and Polynomial optimization \and Boosted DCA with exact line search \and FDGP method \and Portfolio optimization}
\subclass{90C26 \and 90C30\and 91G10}

\section{Introduction}
We are interested in solving the polynomial optimization problem with linear constraints:
\begin{equation*}
	\label{eq:POPL}
 	\min_{\x\in\calP}f(\x),\tag{POPL}
\end{equation*}
where $f$ is a multivariate polynomial, and $\calP$ represents a nonempty convex polyhedral set (not necessarily bounded), defined by
\begin{equation*}
	\calP:=\{\x\in\R^n:
	\langle \bs{a}_i,\x\rangle\le b_i, \langle \bs{p}_j,\x\rangle =q_j, i=1,\ldots,m,j=1,\ldots,r\}.
\end{equation*}
Here, $\bs{a}_i,\bs{p}_j\in\R^n$ and $b_i,q_j\in\R$. Throughout the paper, 
the polynomial function $f$ is assumed to be lower bounded over $\calP$.

This problem arises in many applications including, but not limited to, higher-order moment portfolio selection \cite{Maringer2009GlobalOO,dinh2016dc,Niu2011,niu2019higher}, eigenvalue complementarity problem \cite{le2012dc,niu2013efficient,niu2015solving,niu2019improved}, tensor complementarity problem \cite{song2016properties}, Euclidean distance
matrix completion problem \cite{bakonyi1995euclidian}, copositivity of matrix \cite{bras2016copositivity,dur2013testing,you2022refined}, Boolean polynomial program \cite{niu2019discrete}, and bilinear matrix inequalities \cite{boyd1994linear,niu2014dc}, among other. Each of these applications underscores the essential and widespread utility of addressing the stated polynomial optimization problem with linear constraints, thereby highlighting the imperative nature of devising efficient and robust solutions for it.

The \eqref{eq:POPL} is indeed a special class of the difference-of-convex (DC) program \cite{hartman1959functions,hiriart1985generalized,le2018dc}, defined as below:  
\begin{equation}\label{eq:P-dc}
	\min_{\x\in\mathcal{\calP}}\{f(\x):=g(\x)-h(\x)\}, \tag{P$_\text{dc}$}
\end{equation}
where $g$ and $h$ are convex polynomials. There are some related works on DC decompositions for polynomials. The DC decomposition for quadratic functions has been studied in \cite{hoai2000efficient,bomze2004undominated,le1997solving} based on the eigenvalue decomposition of real symmetric matrices. However, in the case of general multivariate polynomials, this technique is no longer available and the DC decomposition is more difficult to obtain (especially in dense polynomials). Niu et al. \cite{Niu2011} 
proposed a DC decomposition for general polynomials in the form of $\frac{\sigma}{2}\|\x\|^2-(\frac{\sigma}{2}\|\x\|^2-f)$ where $\sigma>0$ is obtained by estimating an upper bound of the spectral radius of the Hessian matrix of $f$ over a compact convex set. This technique has been successfully applied to several real-world applications such as the higher-order moment portfolio optimization \cite{Niu2011} and the eigenvalue complementarity problems \cite{niu2013efficient,niu2015solving,niu2019improved}. Ahmadi et al. \cite{ahmadi2018dc} explored DC decompositions for polynomials via algebraic relaxation techniques. They showed in \cite{ahmadi2012convex} that convexity is different from sos-convexity, and characterized the discrepancy between convexity and sos-convexity in \cite{ahmadi2013complete} for polynomials. Several DC decompositions are proposed in \cite{ahmadi2018dc} by solving linear, second-order cone, or semidefinite programs. Meanwhile, Niu defined in \cite{niu2018difference} the so-called Difference-of-Convex-Sums-of-Squares (DC-SOS) decomposition for general polynomials and developed several practical DC-SOS decomposition techniques. Some of these decomposition techniques are also parallelizable. The DC decomposition technique proposed in this paper is indeed a special DC-SOS decomposition in form of power-sums of linear forms, namely the power-sum DC (PSDC) decomposition. We will show that this decomposition can be established by solving a sparse linear system, which is discussed in Section \ref{sec:dec_reform}.

Concerning the solution method for DC programming formulation \eqref{eq:P-dc}, the most popular method is DCA, introduced by Pham \cite{1988Duality} in 1985 as an extension of the subgradient method, and extensively developed by Le Thi and Pham since 1994 (see, e.g.,\cite{le1997solving,tao2005dc,le2018dc,1997Convex,tao1998dc} and the references therein). The main idea of DCA is to minimize a sequence of convex majorizations of the DC objective function $f$ by linearizing the second DC component $h$ at the current iterate. The general convergence theorem of DCA ensures that every limit point of the generated sequence $\{\x^k\}$ by DCA is a critical point of \eqref{eq:P-dc} (see, e.g., \cite{1997Convex,tao2005dc}). In recent years, some accelerated DCAs are established. Artacho et al. \cite{artacho2018accelerating} proposed a boosted DCA (BDCA) by incorporating DCA with an Armijo-type line search to potentially accelerate DCA under the smoothness assumption of both $g$ and $h$. This method is extended to the non-smooth case of $h$ component in \cite{aragon2018boosted}. Meanwhile, Niu et al. \cite{niu2019higher} further extended BDCA to the convex constrained DC program for both smooth and nonsmooth cases. The global convergence theorem is established under the {\L}ojasiewicz subgradient inequality. Besides, some other accelerations based on the heavy-ball method \cite{polyak1964some} and Nesterov’s extrapolation \cite{nesterov1983method} are also proposed. The inertial DCA (InDCA) \cite{de2019inertial} is established by de Oliveira et al. as a heavy-ball type \cite{polyak1964some} accelerated DCA. Le Thi et al. proposed in \cite{nhat2018accelerated} a Nesterov-type accelerated DCA (ADCA). Wen et al. \cite{wen2018proximal} proposed a proximal DCA algorithm with Nesterov-type extrapolation  \cite{gotoh2018dc}. In this paper, based on the BDCA framework for convex constrained DC program established in \cite{niu2019higher}, we introduce a variant of BDCA with exact line search for \eqref{eq:POPL}.

\emph{Contribution.} (i) We propose two special DC-SOS decompositions in the power-sum of linear forms (namely, the power-sum DC decomposition) for multivariate polynomials. These decompositions can be efficiently generated by solving sparse linear systems. (ii) We introduce BDCA with an exact line search (BDCA$_\text{e}$) for solving \eqref{eq:POPL}, wherein the line search entails computing roots of a univariate polynomial in an interval, and an upper bound for the initial step size of the line search is estimated. (iii) The convex subproblems required in BDCA$_\text{e}$ are tackled by employing the Fast Dual Proximal Gradient (FDPG) method \cite{beck2014fast}, which leverages the separable block structure of the power-sum DC decompositions. (iv) The convergence analysis of BDCA$_\text{e}$ is established akin to the methodology in \cite{niu2019higher}, and its convergence rate under the Kurdyka-{\L}ojasiewicz property is substantiated.

We delineate three major advantages of our proposed power-sum DC decompositions. Firstly, they furnish effective DC decompositions for polynomials. Secondly, the convex subproblem in BDCA$_\text{e}$ can be efficiently resolved in parallel due to the unique power-sum structure. Thirdly, the exact line search is simplified to the process of locating the roots of a univariate polynomial, with coefficients being computed explicitly based on the power-sum DC decompositions. These advantages are also regarded as major contributions of this paper.

The rest of this paper is organized as follows. In Section \ref{sec:pre}, we recall some notations and preliminaries required in the paper. In Section \ref{sec:dec_reform}, we establish two power-sum DC decompositions to get two DC programming formulations for \eqref{eq:POPL}. The corresponding BDCA$_\text{e}$ with the two DC formulations are proposed in Section \ref{sec:DCAe}. The convergence analysis and the convergence rate of BDCA$_\text{e}$ are proved in Section \ref{sec:con-analysis}. The FDPG method for efficiently solving the convex subproblem is discussed in Section \ref{sec:FDPG}. Some numerical experiments on the Mean-Variance-Skewness-Kurtosis portfolio optimization model and the polynomial optimization problem with box constraint are reported in Section \ref{sec:numerical-experiment}. Some concluding remarks and important questions are summarized in the final section.

\section{Notations and preliminaries}\label{sec:pre}
Throughout this paper, matrices and vectors are written in bold uppercase letters and lowercase letters (e.g., $\bs{X}$ and $\bs{x}$) respectively. We use $x_i$ to denote the $i$-th coordinate of the vector $\bs{x}$, and $\bs{X}^{\top}$ and $\bs{X}^{-1}$ to denote the transpose matrix and the inverse matrix of $\bs{X}$. Let $\R^n$ be the $n$-dimensional Euclidean space endowed with the classical inner product $\langle\x,\y\rangle:=\sum_{i=1}^n x_iy_i$ for $\x,\y\in \R^n$, the induced Euclidean norm $\|\x\|:=\sqrt{\langle\x,\x\rangle}$ and the $\ell_{\infty}$ norm $\|\x\|_{\infty}:=\max_{1\le i\le n}\lvert x_i\rvert$. For $\bs{X}\in\R^{n\times m}$, $\|\bs{X}\|$ denotes the spectral norm of $\bs{X}$ defined by $\|\bs{X}\|: = \sqrt{\lambda_{\max}(\bs{X}^{\top}\bs{X})}$, where $\lambda_{\max}(\bs{X}^{\top}\bs{X})$ is the largest eigenvalue of the matrix $\bs{X}^{\top}\bs{X}$. Let $\bs{e}\in\R^n$ be the vector of ones and $\N:=\{0,1,\ldots\}$ be the set of natural numbers. The gradient of a differentiable function $f:\R^n\rightarrow \R$ at $\x\in\R^n$ is denoted by $\nabla f(\x)\in\R^n$.

A function $f:\R^n\rightarrow \R$ is convex if 
$$f(\lambda\x+(1-\lambda)\y)\le\lambda f(\x)+(1-\lambda)f(\y)\text{ for all } \x,\y\in \R^n \text{ and } \lambda\in(0,1),$$
and $f$ is called $\rho$-strongly convex ($\rho>0$) if  $f-\frac{\rho}{2}\|\cdot\|^2$ is convex.

A function $\psi: \R^n\rightarrow \R^n$ is called Lipschitz continuous, if there is some constant $L>0$ such that
$$\|\psi(\x)-\psi(\y)\|\le L\|\x-\y\|\text{ for all }\x,\y\in\R^n,$$
furthermore, $\psi$ is said to be locally Lipschitz continuous if, for each $\x\in\R^n$, there exists a neighborhood $\mathcal{U}$ of $\x$ such that $\psi$ restricted to $\mathcal{U}$ is Lipschitz continuous.

For a proper closed function $f:\R^n\to (-\infty,\infty]$, the Fr\'echet
subdifferential of $f$ at $\x_0\in \text{dom}f:=\{\x\in\R^n:f(\x)<\infty\}$ is defined by
$$\partial^{F}f(\x_0)=\{\bs{u}\in\R^n: \liminf_{\bs{h}\rightarrow \bs{0}}\dfrac{f(\x_0+\bs{h})-f(\x_0)-\langle\bs{u},\bs{h}\rangle}{\|\bs{h}\|}\ge0\},$$
and if $\x_0\notin \text{dom}f$, we set $\partial^{F}f(\x_0)=\emptyset.$ A point $\x_0\in \R^n$ is called a Fr\'echet critical point of $f$, if $0\in\partial^{F}f(\x_0).$ The effective domain of $\partial^Ff$ is defined by 
$$\text{dom} (\partial^Ff):=\{\x\in\text{dom} f: \partial^{F}f(\x)\ne\emptyset\}.$$ Particularly, when $f$ is convex, $\partial^{F}f$ coincides with the classical subdifferential in convex analysis, defined by 
$$\partial f(\x_0)=\{\bs{u}\in\R^n: f(\x)-f(\x_0)\ge\langle\bs{u},\x-\x_0\rangle,\forall\x\in\R^n\}.$$

Let $\mathrm{\Gamma}_0(\R^n)$ denote by the set of all proper closed and convex functions from $\R^n$ to $(-\infty,\infty]$. For a function $f\in\mathrm{\Gamma}_0(\R^n)$, its conjugate function $f^*$ is defined by
$$f^*(\y):=\sup_{\x}\{\langle \y,\x \rangle - f(\x): \x\in \R^n\}, \forall \y\in \R^n.$$
The proximal mapping of $f$ at $\x\in \R^n$ is given by
$$\prox\nolimits_{f}(\x):=\argmin_{\bs{u}}\{f(\bs{u}) + \frac{1}{2}\|\x-\bs{u}\|^2 :  \bs{u}\in\R^n\}.$$  
We call that $f$ is `prox-friendly' if $\prox\nolimits_{f}(\x)$ is easy to compute. See a list of prox-friendly convex functions in \cite[Chapter 7]{beck2017first}. For a nonempty closed convex set $\mathcal{C}$ of $\R^n$, the indicator function of $\mathcal{C}$ is defined by 
\begin{equation*}
    \chi_{\mathcal{C}}(\x):=
    \begin{cases}
     0 & \text{if } \x\in\mathcal{C},\\
     \infty& \text{otherwise}.
    \end{cases}
\end{equation*}
Note that $\chi_{\calC}$ belongs to $\mathrm{\Gamma}_0(\R^n)$ and $\partial \chi_{\calC}(\x)=\mathcal{N}_{\calC}(\x)$ if $\x\in \calC$, where $\mathcal{N}_{\calC}(\x)$ denotes the normal cone of $\calC$ at $x$, defined by
$$\mathcal{N}_{\calC}(\x):=\{\bs{v}\in \R^n: \langle \bs{v}, \bs{y}-\x \rangle \leq 0, \forall \bs{y} \in \calC\},$$
and $\mathcal{N}_{\calC}(\x) = \emptyset$ if $\x\notin \calC$.

A function $f:\R^n\rightarrow [-\infty,\infty]$ is called difference-of-convex (DC) if $f=g-h$ with $g, h\in\mathrm{\Gamma}_0(\R^n)$, where $g$ and $h$ are called DC components of $f$. By introducing $\chi_{\calP}$ into the component $g$ of the DC program \eqref{eq:P-dc} as
$$\min_{\x}\{(g+\chi_{\calP})(\x) - h(\x): \x\in \R^n\},$$
a point $\x^*\in \calP$ is called a (DC) critical point of \eqref{eq:P-dc} if and only if
$$0\in \partial (g + \chi_{\calP})(\x^*) - \partial h(\x^*).$$ 
If $h$ is differentiable at $\x\in \R^n$, then one has
$$\partial^{F}f(\x)=\partial g(\x)-\nabla h(\x).$$

In particular, to the case where both $g$ and $h$ are polynomials and $\calP$ is a convex polyhedral set, then $\x^*\in \calP$ is a DC critical point of \eqref{eq:P-dc} if and only if $$\nabla h(x^*) - \nabla g(\x^*) \in \mathcal{N}_{\calP}(\x^*).$$

The cone of feasible directions of $\calP$ at $\bar{\y}\in\calP$ is defined as $$\mathcal{D}(\bar{\y}):=\{\bs{d}\in\R^n: \exists \delta>0 \text{ such that } \bar{\y}+t\bs{d}\in\calP, \forall t\in[0,\delta]\},$$
and the active set at $\bar{\y}$ is defined by 
$$\mathcal{A}(\bar{\y})=\{i\in\{1,\ldots,m\}:\langle\bs{a}_i,\bar{\y}\rangle=b_i\}.$$
Clearly, 
$$\mathcal{D}(\bar{\y})=\{\bs{d}\in\R^n:\langle\bs{a}_i,\bs{d}\rangle\le0,\langle \bs{p}_j,\bs{d}\rangle=0,i\in\mathcal{A}(\bar{\y}),j=1,\ldots,r\}.$$
Let $\mathbb{H}_d[\x]$ denote by the set of all $n$-variable and $d$-degree homogeneous polynomials (forms) with coefficients in $\mathbb{R}$, defined by
$$\bbH_d[\x]:=\{f=\sum\nolimits_{\balpha}c_{\balpha}{\x}^{\balpha}:\x\in\R^n, c_{\balpha}\in\R,\balpha\in\calI_{n,d}\}.$$ Here, $\mathcal{I}_{n,d}$ denotes the set encompassing all weak compositions of $d$ into $n$ parts, defined by $\calI_{n,d}=\{\balpha=(\alpha_1,\ldots,\alpha_n):\sum_{i=1}^n\alpha_i=d,\alpha_i\in\N\}$. The cardinality of this set is denoted by $s_{n,d}$, is $\binom{n+d-1}{d}$.Thus, the set of all real polynomials with degree up to $d$ in $n$ variables is defined by $\R_d[\x]:=\cup_{k=0}^d \bbH_k[\x]$. The power-product matrix associated with $\calI_{n,d}$ is given by  $$\whV(n,d):=\bigg(\tbinom{d}{\balpha^j}(\balpha^i)^{\balpha^j}\bigg)_{1\le i,j\le s_{n,d}},$$
where $\tbinom{d}{\balpha^j}:=\frac{d!}{\alpha^j_1!\cdots\alpha^j_n!}$ is the multinomial coefficient and $(\balpha^i)^{\balpha^j}:=\prod_{k=1}^n(\alpha_k^i)^{\alpha_k^j}$. The power-product matrix $\whV(n,d)$ is nonsingular for all positive integers $n$ and $d$, and is sparse if $n\gg d$. See \cite{niu2023power} for more properties on the power-product matrix.

A form $f(\x)\in\bbH_d[\x]$ is said to have a \emph{power-sum} representation, if there exist linear forms $L_i(\x)\in\bbH_1[\x]$ and scalars $\lambda_i\in\R$ ($i=1,\ldots,r$) such that  $$f(\x)=\sum_{i=1}^{r}\lambda_iL_i^d(\x), \forall \x\in \R^n.$$

A proper and closed function $f:\R^n\to (-\infty,\infty]$ is said to have the Kurdyka--\L ojasiewicz (KL) property at $\x^*\in \text{dom}(\partial^{F} f)$, if there exist some constant $\eta\in(0,\infty)$, a concave function $\varphi: [0,\eta]\rightarrow [0,\infty)$ and a neighborhood $\mathcal{U}$ of $\x^*$, such that 
	\begin{enumerate}
		\item [(i)] $\varphi$ belongs to the class $\mathscr{C}^1$ on $(0,\eta);$
		\item [(ii)] $\varphi(0)=0$ and $\varphi^{\prime}>0$ on $(0,\eta);$
		\item [(iii)]$\forall \x\in \mathcal{U}$ with $f(\x)-f(\x^*)\in (0,\eta)$, we have the KL inequality
		$$\varphi^{\prime}(f(\x)-f(\x^*))\text{dist}(0,\partial^{F}f(\x))\ge 1.$$
\end{enumerate}
The class of functions endowed with the KL property is very ample. For example, all semi-algebraic and real subanalytic functions adhere to the KL inequality \cite{bolte2007lojasiewicz,kurdyka1994wf,lojasiewicz1993geometrie}. Consequently, the sum of polynomial functions and indicator functions associated with polyhedral sets also exhibits the KL property.

\section{Power-Sum-DC decompositions of polynomials}\label{sec:dec_reform}
\subsection{Power-sum representation}\label{subsec:power-sum}
Numerous studies have established the existence of a power-sum representation for homogeneous polynomials (see e.g., \cite{biosca2001representation,reznick1992sum}). In the quadratic case, the power-sum representation is nothing but spectral decomposition. 
Reznick outlined a necessary and sufficient condition for constructing power-sum decompositions of binary forms in \cite[section 5]{reznick1992sum}, providing a cornerstone for the subsequent formulation of an efficient algorithm to decompose binary polynomials into power-sums of linear forms with the minimal term count, as expounded in \cite{comon1996decomposition}. In the more general case, Biosca proved in \cite{biosca2001representation} that the set of the $d$-th power of all linear forms $\{L^d: L\in \bbH_1[\x]\}$ is a generating set for $\bbH_d[\x]$.
Nevertheless, Biosca's proof relied on induction, leaving a general method for identifying a finite generating set for any polynomial as a still-unresolved challenge. In our work, we unveil a finite generating set applicable to any polynomial $\varphi(\x)\in \bbH_d[\x]$. Furthermore, we elucidate that constructing a power-sum decomposition for $\varphi(\x)$ becomes a tractable task through the solution of a sparse linear system.
Lemma \ref{lem:psrep} summarizes the essential result.
\begin{lemma}\label{lem:psrep}
Let $\varphi(\x)$ be a form in $\bbH_d[\x]$, expressed as $\varphi(\x)=\sum_{\balpha\in \calI_{n,d}}c_{\balpha}{\x}^{\balpha},$ where $\bs{c}:=\big(c_{\balpha^1},\ldots,c_{\balpha^{s_{n,d}}}\big)^{\top}$. 
Then, a power-sum representation of $\varphi(x)$ is given by
	\begin{equation}\label{eq:psrep}
		\varphi(\x)=\sum_{\balpha\in \calI_{n,d}}\lambda_{\balpha}\langle\balpha, \x\rangle^d,
	\end{equation}
where $\bs{\lambda}=\big(\lambda_{\balpha^1},\ldots,\lambda_{\balpha^{s_{n,d}}}\big)^{\top}$ is the unique solution of the linear system
\begin{equation}\label{eq:linear-system}
	\whV(n,d)\bs{\lambda}=\bs{c}.    
\end{equation}
\end{lemma}
\begin{proof}
Let $\x\in \R^n$, the multinomial equation reads as
\begin{equation*}
    \bigg(\sum_{i=1}^{n} x_i\bigg)^d = \sum_{\balpha\in \calI_{n,d}} {d \choose \balpha} \x^{\balpha}.
\end{equation*}
Subsequently, utilizing the relationship
$$\langle\balpha, \x\rangle^d  = \sum_{\boldsymbol{\beta}\in \calI_{n,d}} {d \choose \boldsymbol{\beta}} \balpha^{\boldsymbol{\beta}} \x^{\boldsymbol{\beta}}, \forall \balpha\in \calI_{n,d},$$
we deduce that
\begin{equation}\label{eq:iden}
\calWB=\whV(n,d)\calB,
\end{equation}
where $\calB=(\x^{\balpha^1},\x^{\balpha^2},\ldots,\x^{\balpha^{s_{n,d}}})^{\top}$ and $\calWB=(\langle\balpha^1,\x\rangle^d,\langle\balpha^2,\x\rangle^d,\ldots,\langle\balpha^{s_{n,d}},\x\rangle^d)^{\top}$. Given the nonsingularity of $\whV(n,d)$ proved in \cite{niu2023power}, we can uniquely rewrite \eqref{eq:iden} as
$$\calB=\whV^{-1}(n,d)\calWB.$$ 
Hence $$\varphi(\x)=\bs{c}^{\top}\calB=\bs{c}^{\top} \whV^{-1}(n,d)\calWB=\bs{\lambda}^{\top}\calWB,$$
where $\bs{\lambda}=(\whV^{-1}(n,d))^{\top}\bs{c}$.
\qed\end{proof} 

\begin{remark}
As demonstrated above, a power-sum representation of forms can be obtained by solving the linear system as expressed in Equation \eqref{eq:linear-system}. It's important to note that the matrix $\whV(n,d)$ is asymmetric in general, and its size is $\tbinom{n+d-1}{d}^2$, which could be very large even if $n$ and $d$ are not too large.  For instance, when $d=4$ and $n=100$, the magnitude of $\whV(n,d)$ exceeds millions. However, owing to the sparsity of $\whV(n,d)$ particularly when $n\gg d$, solving Equation \eqref{eq:linear-system} becomes remarkably efficient. In most real-world applications of polynomial optimization, the number of variables $n$ significantly exceeds the degree $d$, hence the matrix $\whV(n,d)$ is sparse. Fig. \ref{fig:spa} provides some illustrative examples. More sparsity\footnote{The sparsity of $\bs{M}\in\R^{n\times m}$ is defined by the ratio of zero entries to the total number of entries in the matrix} properties of $\whV(n,d)$ is referred to \cite{niu2023power}.
\begin{figure}[ht!]
	\centering
	\includegraphics[width=10cm,height=6cm]{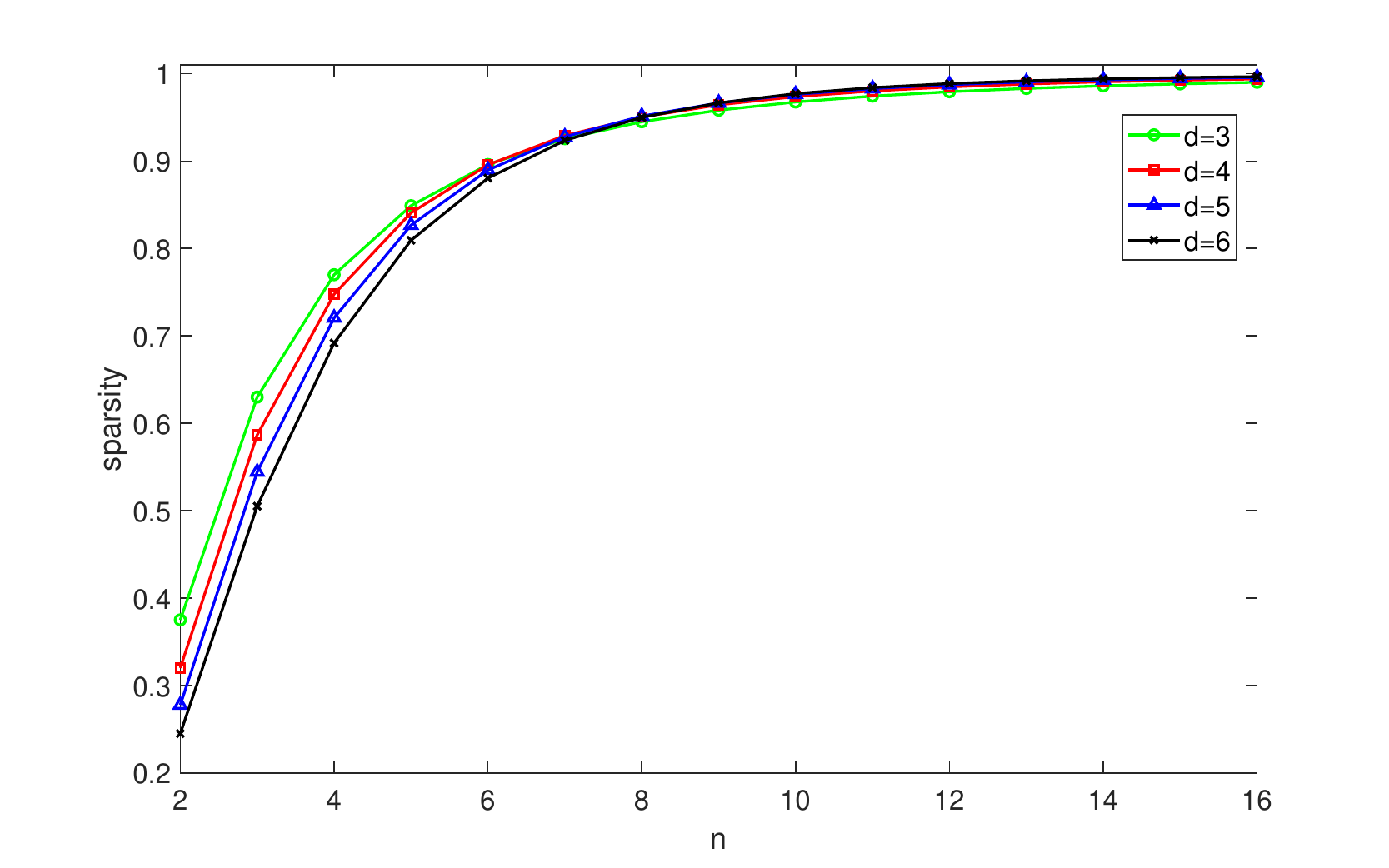}
	\caption{The sparsity of the matrix $\whV(n,d)$ changes with variable $n$ from $2$ to $16$ for some fixed degree $d\in\{3,4,5,6\}$. It is observed that, for $n>6$, the sparsity of $\whV(n,d)$ for all $d\in\{3,4,5,6\}$ is more than $90\%$, and the sparsity goes to $1$ as $n\rightarrow\infty$ for any fixed $d$.}
	\label{fig:spa}
\end{figure}
\end{remark}
\subsection{Power-sum difference-of-convex decompositions}\label{subsec:PSDC}
In this section, we establish two DC decompositions for any polynomial, namely the \emph{termwise power-sum-DC} (T-PSDC) decomposition and the  \emph{homogenizing-dehomogenizing power-sum-DC} (HD-PSDC) decomposition. 
\paragraph{\textbf{T-PSDC decomposition}}: We will consider two cases (the odd degree case and the even degree case) in the next proposition.
\begin{proposition}\label{cor:evenform}
Let $f(\x)\in \bbH_d[\x]$. 
\begin{enumerate}
	\item[(i)] If $d$ is even, then a PSDC decomposition is
	\begin{equation}\label{eq:evenform}
		f(\x)=\sum_{\balpha\in \calI^{+}}\lambda_{\balpha}\langle \balpha,\x\rangle^d-\sum_{\balpha\in \calI^{-}}(-\lambda_{\balpha})\langle \balpha,\x\rangle^d,
	\end{equation}
	where $\calI^{+}=\{\balpha\in \calI_{n,d}: \lambda_{\balpha}>0\}$ and $\calI^{-}=\{\balpha\in \calI_{n,d}: \lambda_{\balpha}<0\}.$
	\item[(ii)] If $d$ is odd, then a PSDC decomposition is
	\begin{equation}\label{eq:oddform}
		f(\x)=\sum_{\balpha\in \calI^{+}}\lambda_{\balpha}\langle \balpha,\hat{\x}\rangle^{d+1}-\sum_{\balpha\in \calI^{-}}(-\lambda_{\balpha})\langle \balpha,\hat{\x}\rangle^{d+1},
	\end{equation}
	where $\calI^{+}=\{\balpha\in \calI_{n+1,d+1}: \lambda_{\balpha}>0\}, \calI^{-}=\{\balpha\in \calI_{n+1,d+1}: \lambda_{\balpha}<0\}$ and $\hat{\x}=\begin{pmatrix}
\x\\
1
\end{pmatrix}.$
\end{enumerate}
\end{proposition}
\begin{proof}
Case (i) is elucidated by Lemma \ref{lem:psrep}. For the proof of case (ii), we multiply $f(\x)$ with an additional variable $x_h$, yielding an even degree form  $\tilde{f}(\x,x_h)=x_hf(\x)$. Subsequently, case (ii) is deduced by applying case (i) to $\tilde{f}(\x,x_h)$ and setting $x_h=1$.
\qed
\end{proof}
For any polynomial $f(\x)\in\R_d[\x]$, we can express it as:
$$f(\x)=\sum\nolimits_{k=0}^d f_k(\x)$$
where each $f_k(\x)\in\bbH_k[\x]$ for $k=0,1,\ldots,d.$ We can then apply Proposition \ref{cor:evenform} to each form $f_k(\x), k=1,2,\ldots,d$, obtaining a so-called \emph{termwise power-sum-DC} (T-PSDC) decomposition of $f(\x)$. The decomposition procedure is summarized in Algorithm \ref{alg:T-PSDC}. 

 \begin{algorithm}[ht]
\caption{\textbf{T-PSDC Decomposition}}
\label{alg:T-PSDC}
\KwIn{$f_k(\x),k=1,\ldots,d$ (all homogeneous components of the polynomial $f(\x)$);}
\KwOut{DC components $g_k(\x)$ and $h_k(\x), k=1,\ldots,d$;}
\For{$k=1,\ldots,d$}{
\eIf{$k$ is odd}{
$\hat{n}\leftarrow n+1,\hat{d}\leftarrow k+1,\hat{\x}\leftarrow\begin{pmatrix}
\x\\
1
\end{pmatrix};$\\
}{$\hat{n}\leftarrow n,\hat{d}\leftarrow k,\hat{\x}\leftarrow \x;$\\}
Rewrite $f_k(\x)=\sum\nolimits_{\balpha\in\calI_{\hat{n},\hat{d}}}c_{\balpha}\x^{\alpha}$;\\
$\calI \leftarrow \calI_{\hat{n},\hat{d}}$;\\
$\bs{c}\leftarrow$ the coefficients of $f_k(\x)$;\\
Compute $\bs{\lambda}$ by solving linear equations: 
$\whV^{\top}(\hat{n},\hat{d})\bs{\lambda}=\bs{c}$;\\
Set $\calI^{+}=\{\balpha\in \calI: \lambda_{\balpha}>0\},\calI^{-}=\{\balpha\in \calI: \lambda_{\balpha}<0\};$\\
Compute $g_k(\x)=\sum_{\balpha\in \calI^{+}}\lambda_{\balpha}\langle \balpha,\hat{\x}\rangle^{\hat{d}}$ and $h_k(\x)=\sum_{\balpha\in \calI^{-}}-\lambda_{\balpha}\langle \balpha,\hat{\x}\rangle^{\hat{d}};$\\
}
\Return $g_k(\x)$ and $h_k(\x), k=1,\ldots,d$;\\
\end{algorithm}

\paragraph{\textbf{HD-PSDC decomposition}}: 
An $n$-variate $d$-degree form can be dehomogenized to a polynomial in 
$n-1$ variables of degree at most $d$ by assigning a constant value of $1$ to one variable (cf. dehomogenization). Conversely, any polynomial can be transformed into a form by introducing a new variable $x_h$ and adjusting each monomial with appropriate powers of $x_h$, as expressed by $f_H(\x,x_h)=x_h^d\ f(x_1/x_h,\ldots,x_n/x_h)$ (cf. homogenization). Consequently, we propose the \emph{homogenizing-dehomogenizing power-sum-DC} (HD-PSDC) decomposition as encapsulated in Algorithm \ref{alg:HD-PSDC}, founded on Proposition \ref{cor:oddevenhomogenization}.
\begin{proposition}\label{cor:oddevenhomogenization}
Let $f(\x)\in \R_d[\x]$. One has the PSDC decomposition
\begin{equation}\label{eq:oddevenhomogenization}
f(\x)=\sum_{\balpha\in \calI^{+}}\lambda_{\balpha}\langle \balpha,\hat{\x}\rangle^{d_f}-\sum_{\balpha\in \calI^{-}}(-\lambda_{\balpha})\langle \balpha,\hat{\x}\rangle^{d_f},
\end{equation}
where $\calI^{+}=\{\balpha\in \calI_{n+1,d_f}: \lambda_{\balpha}>0\}, \calI^{-}=\{\balpha\in \calI_{n+1,d_f}: \lambda_{\balpha}<0\}$ and $d_f=2\lceil\frac{d}{2}\rceil\footnote{${\lceil \cdot \rceil}$ is the ceiling function of a number.},\hat{\x}=\begin{pmatrix}
\x\\
1
\end{pmatrix}.$
\end{proposition}
 \begin{proof}
Let $f(\x)\in \mathbb{R}_d[\x]$. We will establish the result through two distinct cases:
\begin{enumerate}
    \item[(i)] For an even $d$, we homogenize $f(\x)$ to yield an $(n+1)$-variate polynomial of degree $d$ as $$f_H(\x,x_h)=x_h^d\ f(x_1/x_h,\ldots,x_n/x_h).$$
    Then, we apply Lemma \ref{lem:psrep} for $f_{H}(\x,x_h)$ and assign $x_h=1$, resulting in  the desired expression.
    \item[(ii)] For an odd $d$, we homogenize $f(\mathbf{x})$ to attain an $(n+1)$-variate polynomial of degree $d+1$ as $$f_H(\x,x_h)=x_h^{d+1}\ f(x_1/x_h,\ldots,x_n/x_h).$$
    Again, we apply Lemma \ref{lem:psrep} for $f_{H}(\x,x_h)$ and set $x_h=1$, yielding the desired result.
\end{enumerate}
\qed\end{proof}

\begin{algorithm}[ht]
\caption{HD-PSDC Decomposition}
\label{alg:HD-PSDC}
\KwIn{polynomial $f(\x)$;}
\KwOut{DC components $g(\x)$ and $h(\x)$;}
$d\leftarrow$ the degree of $f(\x)$;\\
\If{$d$ is odd}{
$d\leftarrow d+1$;\\
}
 $n\leftarrow n+1, \hat{\x}\leftarrow \begin{pmatrix}
\x\\
1
\end{pmatrix}$;\\
Rewrite $f(\x)=\sum\nolimits_{\balpha\in\calI_{n,d}}c_{\balpha}\x^{\alpha}$;\\
$\calI\leftarrow{\calI_{n,d}}$;\\
$\bs{c}\leftarrow$ the coefficients of $f(\x)$;\\
 Compute $\bs{\lambda}$ by solving the linear system: 
 $\whV^{\top}(n,d)\bs{\lambda}=\bs{c};$\\
Set $\calI^{+}=\{\balpha\in \calI: \lambda_{\balpha}>0\},\calI^{-}=\{\balpha\in \calI: \lambda_{\balpha}<0\};$\\
Compute $g(\x)=\sum_{\balpha\in \calI^{+}}\lambda_{\balpha}\langle \balpha,\hat{\x}\rangle^{d}$ and $h(\x)=\sum_{\balpha\in \calI^{-}}-\lambda_{\balpha}\langle \balpha,\hat{\x}\rangle^d;$\\
 \Return $g(\x)$ and $h(\x)$.  
\end{algorithm}

\paragraph{Numerical test:} We compare the proposed two PSDC decompositions and the DSOS-DC decomposition in \cite{ahmadi2018dc}. All algorithms are implemented  on MATLAB using the polynomial optimization library SPOT \cite{megretski2013spot} and tested with the solver MOSEK \cite{aps2019mosek} and $\text{MATLAB}$ R2021a on a laptop equipped with Intel Core i5-1035G1 CPU 1.19GHz and 8GB RAM. The numerical results are summarized in Table \ref{tab:twoPSDC}. 

We observed that for odd degree cases $d\in \{3,5\}$, the total running time for T-PSDC and HD-PSDC are almost the same, whereas for even degree cases $d\in\{4,6\}$, HD-PSDC performs about $2$ times faster than T-PSDC. Moreover, T-PSDC and HD-PSDC are at least $3$ times faster than DSOS-DC for all cases. The total running time increases slowly with respect to the increase of $n$, while sharply with respect to the increase of $d$. Hence, for higher degree cases with $d \ge 5$, all three methods suffer from the out-of-memory issue for some large $n$ on MATLAB. Furthermore, the running time of HD-PSDC and T-PSDC mainly depends on the generating time of the power-sum matrix, which is approximately $10\sim 100$ times greater than solving the sparse linear system. 
\begin{sidewaystable}[htbp!]
\centering
\resizebox{\linewidth}{!}{
    \begin{tabular}{l|c|ll|ll|ll|ll|ll|ll|ll}
    \toprule
    \multirow{2}*{Alg.} &\multirow{2}*{\diagbox {d}{n}}&\multicolumn{2}{c|}{6}&\multicolumn{2}{c|}{15}&\multicolumn{2}{c|}{24}&\multicolumn{2}{c|}{33}&\multicolumn{2}{c|}{42}&\multicolumn{2}{c|}{51}&\multicolumn{2}{c}{60}\\
    \cline{3-16}&&T$_{gen}$&T$_{sol}$&T$_{gen}$&T$_{sol}$&T$_{gen}$&T$_{sol}$&T$_{gen}$&T$_{sol}$&T$_{gen}$&T$_{sol}$&T$_{gen}$&T$_{sol}$&T$_{gen}$&T$_{sol}$\\
    \midrule
    \multirow{4}*{T-PSDC}&3&0.01&0.00&0.14&0.01&1.69&0.07&13.72&0.25&74.90&0.77&318.35&1.74&1121.43&3.29\\
                        &4&0.01&0.00&0.22&0.02&2.94&0.13&24.46&0.46&135.68&1.53&577.79&3.36&2030.30&6.70\\
                        &5&0.04&0.02&12.35&1.96&940.43&35.01&--&--&--&--&--&--&--&--\\
                        &6&0.05&0.02&19.43&3.53&1547.60&59.82&--&--&--&--&--&--&--&--\\
    \hline
     \multirow{4}*{HD-PSDC}&3&0.00&0.01&0.13&0.01&1.65&0.07&13.60&0.24&74.55&0.77&319.52&1.73&1120.31&3.24\\
                          &4&0.00&0.00&0.12&0.01&1.65&0.08&13.43&0.24&71.85&0.80&300.62&1.85&1032.44&3.59\\
                          &5&0.02&0.01&12.31&2.04&926.45&35.27&--&--&--&--&--&--&--&--\\
                          &6&0.03&0.01&12.21&1.94&945.31&34.73&--&--&--&--&--&--&--&--\\
     \hline
     \multirow{4}*{DSOS-DC}&3&\multicolumn{2}{c|}{0.74}&\multicolumn{2}{c|}{5.32}&\multicolumn{2}{c|}{36.75}&\multicolumn{2}{c|}{156.66}&\multicolumn{2}{c|}{508.64}&\multicolumn{2}{c|}{1062.69}&\multicolumn{2}{c}{--}\\
                            &4&\multicolumn{2}{c|}{0.28}&\multicolumn{2}{c|}{5.71}&\multicolumn{2}{c|}{39.82}&\multicolumn{2}{c|}{198.58}&\multicolumn{2}{c|}{567.10}&\multicolumn{2}{c|}{1218.55}&\multicolumn{2}{c}{--}\\
                            &5&\multicolumn{2}{c|}{3.70}&\multicolumn{2}{c|}{2835.44}&\multicolumn{2}{c|}{--}&\multicolumn{2}{c|}{--}&\multicolumn{2}{c|}{--}&\multicolumn{2}{c|}{--}&\multicolumn{2}{c}{--}\\
                             &6&\multicolumn{2}{c|}{3.75}&\multicolumn{2}{c|}{2881.09}&\multicolumn{2}{c|}{--}&\multicolumn{2}{c|}{--}&\multicolumn{2}{c|}{--}&\multicolumn{2}{c|}{--}&\multicolumn{2}{c}{--}\\
     \bottomrule
    \end{tabular}}
    \caption{Comparison of three DC decomposition algorithms (T-PSDC, HD-PSDC and DSOS-DC) for polynomials with $d\in\{3,4,5,6\}$, $n\in \{6,15,\ldots,60\}$, and uniformly distributed random integer coefficients sampled in $[-10,10]$. For each pair $(n,d)$, we perform $10$ runs of each algorithm and compare the average CPU time (in seconds). For T-PSDC and HD-PSDC, T$_{gen}$ and T$_{sol}$ denote the average running time for generating the matrix $\whV$ and for solving the linear system \eqref{eq:linear-system} respectively. For DSOS-DC, only the average total running time is reported.}
    \label{tab:twoPSDC}
\end{sidewaystable}

\subsection{DC reformulations of \eqref{eq:POPL} leveraging PSDC decompositions}
Using the PSDC decompositions T-PSDC and HD-PSDC, as described in Propositions \ref{cor:evenform} and \ref{cor:oddevenhomogenization}, we transform \eqref{eq:POPL} into two DC programs as follows:\\
Let $|\cdot|_p$ represent the $\ell_p$ norm. Then
\begin{itemize}
	\item [(i)] Using T-PSDC decomposition, problem \eqref{eq:POPL} can be reformulated as 
	\begin{equation}\label{eq:T-PSDC}
		\min_{\x\in\calP} f(\x)=\underbrace{\sum_{p=1}^{\lceil \frac{d}{2} \rceil}\|\bs{A}_p^{+}\x+\bs{b}_p^{+}\|_{2p}^{2p}}_{g_1(\x)}-\underbrace{\sum_{p=1}^{\lceil \frac{d}{2} \rceil}\|\bs{A}_p^{-}\x+\bs{b}_p^{-}\|_{2p}^{2p}}_{h_1(\x)},\tag{T--P$_\text{dc}$}
	\end{equation}
	where $\bs{A}_p^+\in\R^{m_1\times n},\bs{A}_p^-\in\R^{m_2\times n},\bs{b}_p^{+}\in\R^{m_1},\bs{b}_p^{-}\in\R^{m_2}$ with $$m_1+m_2\le s_{n,2p}+s_{n+1,2p}$$ and $p=1,\ldots,\lceil \frac{d}{2} \rceil.$
	\item [(ii)] Using HD-PSDC decomposition, problem \eqref{eq:POPL} can be reformulated as 
	\begin{equation}\label{eq:HD-PSDC}
		\min_{\x\in\calP} f(\x)=\underbrace{\|\bs{A}^{+}\x+\bs{b}^{+}\|_{d_f}^{d_f}}_{g_2(\x)}-\underbrace{\|\bs{A}^{-}\x+\bs{b}^{-}\|_{d_f}^{d_f}}_{h_2(\x)},\tag{HD--P$_\text{dc}$}
	\end{equation}
	where $d_f=2\lceil \frac{d}{2} \rceil$ and $\bs{A}^+\in\R^{m_1\times n},\bs{A}^-\in\R^{m_2\times n},\bs{b}^{+}\in\R^{m_1},\bs{b}^{-}\in\R^{m_2}$ with $$m_1+m_2\le s_{n+1,d_f}.$$
\end{itemize}		

\section{BDCA$_\text{e}$ for solving \eqref{eq:T-PSDC} and \eqref{eq:HD-PSDC} }\label{sec:DCAe}
Problem \eqref{eq:T-PSDC} and \eqref{eq:HD-PSDC} are a subclass of DC programming. The classical DCA \cite{1997Convex} is viable and efficient for solving these problems. Meanwhile, Niu et al. proved in \cite[Proposition 1]{niu2019higher} that the direction $\bs{d}^k=\y^k-\x^k$ in DCA is a descent direction (namely, DC descent direction) of $f$ at $\bs{y}^k$ if (1) $\bs{d}^k\neq 0$, (2) $\bs{d}^k$ is a feasible direction of $\calP$ at $\y^k$, and (3) $h$ is strongly convex. In this scenario, executing a line search (either exact or inexact) along $\bs{d}^k$ will yield an additional reduction in the value of the objective function, thereby ensuring an acceleration of DCA. Subsequently, we introduce a Boosted DCA with Exact Line Search (BDCA$_\text{e}$) for problems \eqref{eq:T-PSDC} and \eqref{eq:HD-PSDC}, as delineated in Algorithm \ref{alg:BDCAe}.
\begin{algorithm}[ht!]
	\caption{BDCA$_\text{e}$ for solving problem \eqref{eq:T-PSDC} (or \eqref{eq:HD-PSDC})}
	\label{alg:BDCAe}
	\KwIn{$\x^0\in\calP$, $\varepsilon>0$;}
	\For{$k=0,1,\ldots$}{
	\textbf{Step 1:} Solving the subproblem 
		\begin{equation}\label{eq:subproblem}
		\y^k\in\argmin_{\x\in\calP} ~g_i(\x)-\langle\nabla h_i(\x^k),\x\rangle, \tag{P$_k$}
		\end{equation}
		where $i=1$ for \eqref{eq:T-PSDC} and $i=2$ for \eqref{eq:HD-PSDC}.\\
		\textbf{Step 2:}
		Set $\bs{d}^k=\y^k-\x^k$. If $\|\bs{d}^k\|/(1+\|\x^k\|)<\varepsilon,$ \Return $\x^k$;\\
		\textbf{Step 3:} Set  $\mathcal{I}(\bs{d}^k)=\{i\in\{1,\ldots,m\}:\langle\bs{a}_i,\bs{d}^k\rangle>0\}$ and compute $$\bar{t}=
			\min_{i\in\mathcal{I}(\bs{d}^k)}\frac{b_i-\langle\bs{a}_i,\y^k\rangle}{\langle\bs{a}_i,\bs{d}^k\rangle}.$$\\
		\textbf{Step 4:}
		If $\bar{t}>0$, then $\bs{d}^k$ is a feasible direction, we compute $$t_k=\text{ExactLineSearch}(\bs{d}^k,\y^k,\bar{t}) \text{ by Algorithm \ref{alg:E-line-search}},$$ Otherwise, set 
		$t_k=0$;\\
		\textbf{Step 5:}
		Set $\x^{k+1}=\y^k+t_k\bs{d}^k$;
		}
\end{algorithm}

Here are some comments on Algorithm \ref{alg:BDCAe}:
\begin{itemize}
    \item [(i)] BDCA$_\text{e}$ reduces to the classical DCA when excluding \textbf{Step 3--4} and setting $t_k=0$ for all $k$ in \textbf{Step 5}. Additionally, if the exact line search in \textbf{Step 4} is replaced by the Armijo line search, then BDCA$_\text{e}$ reduces to BDCA in \cite{niu2019higher}.
    \item[(ii)] In \textbf{Step 1}, we leverage the fast dual-based proximal gradient (FDGP) method \cite{beck2014fast} to solve \eqref{eq:subproblem} by taking advantage of the power-sum structure, which will be discussed in Section \ref{sec:FDPG}.
    \item[(iii)] In \textbf{Step 3--4}, we introduce the exact line search to accelerate DCA, which will be discussed in the next two paragraphs. In particular, our exact line search amounts to computing the roots of the derivative of a univariate polynomial, whose coefficients can be computed explicitly by Equation \eqref{eq:c-j} based on the power-sum formulation. Note that the assumption that $f$ is lower bounded over $\calP$ guarantees the finiteness of $t_k$. Otherwise, when $\calI(\bs{d}^k)=\emptyset$, then $\bar{t}=\min \emptyset = \infty$ by convention and it is possible to find $t_k=\infty.$ In this case,  \eqref{eq:P-dc} has no optimal solution.
\end{itemize}
\begin{algorithm}[ht!]
	\caption{Exact line search}
	\label{alg:E-line-search}
	\KwIn{A descent direction $\bs{d}^k$, a point $\y^k$ and a bound $\bar{t}$;}
	\KwOut{Optimal step size $t_k$;}
	Compute $c_j (j=0,\ldots,d_f)$ by Equation \eqref{eq:c-j};\\
	Set $\mathcal{R}\leftarrow\{t:\hat{f}^{\prime}(t)=0\}\cup\{0,\bar{t}\}$;\\
	$t_k\in\argmin_{t\in\mathcal{R}}\hat{f}(t);$\\
\end{algorithm}

In the following, we will explore various facets of BDCA$_\text{e}$, encompassing: the verification of the feasibility of direction $\bs{d}^k$ at $\y^k$ over $\calP$, the estimation of an upper bound for the line search step size, and the elucidation of the exact line search procedure.
\paragraph{\textbf{Upper bound estimation for the line search step size}}:
The next proposition shows that
	\begin{equation}
		\label{eq:tbar}
		\bar{t}=\min_{i\in\mathcal{I}(\bs{d}^k)}\left\{\frac{b_i-\langle\bs{a}_i,\y^k\rangle}{\langle\bs{a}_i,\bs{d}^k\rangle}\right\}
	\end{equation} 
is an upper bound for the line search step size $t_k$ to ensure that $\y^k+t_k\bs{d}^k\in \calP$, where $\mathcal{I}(\bs{d}^k)=\{i\in\{1,\ldots,m\}:\langle\bs{a}_i,\bs{d}^k\rangle>0\}.$
\begin{lemma}\label{lem:delta}
Let $\y^k$ and $\bs{d}^k$ be generated in Algorithm \ref{alg:BDCAe}. Then $\bs{d}^k\in \mathcal{D}(\y^k)$ is equivalent to $\bar{t}>0$. Moreover, if $t \in [0,\bar{t}]$, then $\y^k+t\bs{d}^k\in \calP$, and if $t >\bar{t}$, then $\y^k+t\bs{d}^k\notin \calP$.
\end{lemma}
\begin{proof}
We first prove the following result:
$$\bs{d}^k\in\mathcal{D}(\y^k)~~\Leftrightarrow~~ \min_{i\in\mathcal{I}(\bs{d}^k)}\{b_i-\langle\bs{a}_i,\y^k\rangle\}>0.$$
If $\mathcal{I}(\bs{d}^k)=\emptyset$, the conclusion is obvious by the definition of the cone of feasible directions $\mathcal{D}(\y^k)$. \\
If $\mathcal{I}(\bs{d}^k)\ne\emptyset$, then for $i\in \{1,\ldots,m\}$ and $j\in\{1,\ldots,r\}$, we have  
	\begin{equation*}
		\bs{d}^k\in\mathcal{D}(\y^k)~\Leftrightarrow~ 
		\begin{cases}
			\langle\bs{a}_i,\y^k\rangle=b_i\Rightarrow \langle\bs{a}_i,\bs{d}^k\rangle\le0,\\
			\langle\bs{p}_j,\bs{d}^k\rangle=0.
		\end{cases}
	\end{equation*}
	Invoking the fact that $\langle\bs{p}_j,\bs{d}^k\rangle=\langle\bs{p}_j,\y^k\rangle-\langle\bs{p}_j,\x^k\rangle=q_j-q_j=0$, we deduce 
	$$\bs{d}^k\in\mathcal{D}(\y^k)~\Leftrightarrow~ \langle\bs{a}_i,\y^k\rangle=b_i\Rightarrow \langle\bs{a}_i,\bs{d}^k\rangle\le0~\Leftrightarrow~\langle\bs{a}_i,\bs{d}^k\rangle>0\Rightarrow\langle\bs{a}_i,\y^k\rangle<b_i.$$
The last equivalence condition implies that the conclusion holds. \\
We now proceed to prove the main result as the following:\\
	(i) If $\mathcal{I}(\bs{d}^k)=\emptyset$, we have $\langle\bs{a}_i,\bs{d}^k\rangle\le0$ for $i=1,2,\ldots,m$. Thus, $\forall t\in[0,\infty)$, we observe that the fact
	\begin{equation*}
		\begin{cases}
			\langle\bs{a}_i,\y^k+t\bs{d}^k\rangle=\langle\bs{a}_i,\y^k\rangle+t\langle\bs{a}_i,\bs{d}^k\rangle\le b_i,\\
			\langle\bs{p}_j,\y^k+t\bs{d}^k\rangle=\langle\bs{p}_j,\bs{y}^k\rangle+t\langle\bs{p}_j,\bs{d}^k\rangle=q_j.
		\end{cases}
	\end{equation*}
	holds for $i=1,2,\ldots,m$ and $j=1,2,\ldots,r$. Therefore, we have $\y^k+t\bs{d}^k\in\calP$.\\
	(ii) If $\mathcal{I}(\bs{d}^k)\ne\emptyset$, invoking the fact that $\bar{t}=\min_{i\in\mathcal{I}(\bs{d}^k)}\{\frac{b_i-\langle\bs{a}_i,\y^k\rangle}{\langle\bs{a}_i,\bs{d}^k\rangle}\}>0$, hence, $\forall t\in[0,\bar{t}]$, we have 
	\begin{equation*}
		\langle\bs{p}_j,\y^k+t\bs{d}^k\rangle=\langle\bs{p}_j,\bs{y}^k\rangle+t\langle\bs{p}_j,\bs{d}^k\rangle=q_j.
	\end{equation*} for $j=1,\ldots,r$.\\
	Next, we verify the result $\langle\bs{a}_i,\y^k+t\bs{d}^k\rangle\le b_i$ for $i=1,\ldots,m$ in two cases:\\
	\textbf{Case 1:} If $i\in\mathcal{I}(\bs{d}^k)$, it implies $\langle\bs{a}_i,\bs{d}^k\rangle>0$, we get that  
	$$\langle\bs{a}_i,\y^k+t\bs{d}^k\rangle=\langle\bs{a}_i,\y^k\rangle +t\langle\bs{a}_i,\bs{d}^k\rangle\le\langle\bs{a}_i,\y^k\rangle +\min_{i\in\mathcal{I}(\bs{d}^k)}\{\frac{b_i-\langle\bs{a}_i,\y^k\rangle}{\langle\bs{a}_i,\bs{d}^k\rangle}\}\langle\bs{a}_i,\bs{d}^k\rangle\le b_i.$$
	\textbf{Case 2:} If $i\notin\mathcal{I}(\bs{d}^k)$, it implies $\langle\bs{a}_i,\bs{d}^k\rangle\le0$. Thus, for all $i\notin\mathcal{I}(\bs{d}^k)$, we have $$\langle\bs{a}_i,\y^k+t\bs{d}^k\rangle=\langle\bs{a}_i,\y^k\rangle+t\langle\bs{a}_i,\bs{d}^k\rangle\le b_i.$$
	Moreover, $\forall t_0>\bar{t}$ (Here, we always assume that $\bar{t}<\infty$), we have some $i_0\in\mathcal{I}(\bs{d}^k)$ such that  $$t_0>\frac{b_{i_0}-\langle\bs{a}_{i_0},\y^k\rangle}{\langle\bs{a}_{i_0},\bs{d}^k\rangle}.$$
	Therefore, we can deduce that $$\langle\bs{a}_{i_0},\y^k+t_0\bs{d}^k\rangle=\langle\bs{a}_{i_0},\y^k+\delta_0\bs{d}^k\rangle=\langle\bs{a}_{i_0},\y^k\rangle+\delta_0\langle\bs{a}_{i_0},\bs{d}^k\rangle>b_{i_0}.$$
	This implies $\y^k+t_0\bs{d}^k\notin\calP.$  
\qed
\end{proof}

\paragraph{\textbf{Exact line search}}:
The exact line search procedure in \textbf{Step 4} is detailed in Algorithm \ref{alg:E-line-search}. This algorithm is founded on the minimization problem \eqref{eq:els}:
\begin{equation}\label{eq:els}
	\min_{t\ge0} \{f(\y^k+t\bs{d}^k):\y^k+t\bs{d}^k\in\calP\}. \tag{ELS}
\end{equation}
Without loss of generality, we assume that $f(\x)$ can be represented as Proposition \ref{cor:oddevenhomogenization}, given by the equation:
\begin{equation}\label{eq:els-rp}
	f(\x)=\sum_{\balpha\in\calI_{n+1,d_{f}}}\lambda_{\balpha}
	\langle\balpha,\hat{\x}\rangle^{d_f}.
\end{equation}
Subsequently, substituting $\hat{\x}=\hat{\y}^k+t\hat{\bs{d}}^k$ into \eqref{eq:els-rp} with $\hat{\y}^k
 =\begin{pmatrix}
 \y^k\\
 1
\end{pmatrix}$ and $\hat{\bs{d}}^k=\begin{pmatrix}
 \bs{d}^k\\
 0
\end{pmatrix}$, we obtain 
\begin{equation*}
	\hat{f}(t)=f(\y^k+t\bs{d}^k)=\sum_{j=0}^{d_f}c_jt^j,
\end{equation*}
where the coefficients $c_j$ are determined by
\begin{equation}\label{eq:c-j}
	c_j=\sum_{i=1}^{s_{n+1,d_f}}\lambda_i\tbinom{d_f}{j}\langle\balpha^i, \hat{\y}^k\rangle^{d_f-j}\langle\balpha^i, \hat{\bs{d}}^k\rangle^j.
\end{equation}
Combined with Lemma \ref{lem:delta}, problem \eqref{eq:els} can be represented as the following univariate polynomial minimization problem:
\begin{equation}\label{eq:uni-op}
	\min_t\{\hat{f}(t):0\le t\le\bar{t}\}.
\end{equation}
The minimization problem \eqref{eq:uni-op} is well-defined. Specifically, if the set $\mathcal{I}(\bs{d}^k)$ is non-empty, it implies that $\bar{t}$ is finite, thus confirming the validity of the assertion. Conversely, if $\mathcal{I}(\bs{d}^k)$ is empty, then $\bar{t}$ assumes an infinite value, and the assertion remains valid due to the property that $|\hat{f}(t)|\to \infty$ as $t\to \infty$ and the assumption that $f$ is lower bounded over $\calP$.
\begin{remark} 
Contrary to the Armijo line search method elaborated upon in \cite{artacho2018accelerating,niu2019higher}, our suggested technique necessitates the computation of the parameter $c_j$ as illustrated in Equation \eqref{eq:c-j}, and the resolution of Problem \eqref{eq:uni-op}, diverging from the traditional computation of objective function values. The minimization of Problem \eqref{eq:uni-op} can be effectively executed by identifying the roots of the derivative $\hat{f}^{\prime}(t)$ of $\hat{f}(t)$. Particularly, when $d_f\le 5$, the roots of $\hat{f}^{\prime}(t)$ can be efficiently computed using a root-finding formula, whereas for instances where $d_f>5$, a numerical method is a viable alternative. This methodology allows us to derive an optimal step size for each iteration of the line search procedure.
\end{remark}

\section{Convergence analysis for BDCA$_\text{e}$}\label{sec:con-analysis}
\begin{assume}\label{assumption:h}
	The component $h$ is $\rho$-strongly convex with $\rho>0$.
\end{assume}
Note that this assumption is easy to satisfy for problems \eqref{eq:T-PSDC} and $\eqref{eq:HD-PSDC}$. Indeed, for a DC function $f=\tilde{g}-\tilde{h}$, we can take $g:=\tilde{g}+\frac{\rho}{2}\|\x\|^2$ and $h:=\tilde{h}+\frac{\rho}{2}\|\x\|^2$ with a small $\rho>0$ to ensure the strong convexity of $h$.  
\begin{lemma}\cite{artacho2018accelerating,niu2019higher}\label{lem:f-descent} Under Assumption \ref{assumption:h}, let $\{\y^k\}$, $\{\x^k\}$ and $\{\bs{d}^k\}$ be the sequences generated by Algorithm \ref{alg:BDCAe} when applied to problem \eqref{eq:P-dc}, it holds that
	\begin{equation}\label{eq:f-descent}
		f(\y^k)\le f(\x^k)-\rho\|\bs{d}^k\|^2.    
	\end{equation}
\end{lemma}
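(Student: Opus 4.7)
The plan is to combine the first-order optimality condition of the convex subproblem defining $\y^k$ with the strong convexity of the subtracted DC component. Because $\calP$ is already incorporated into the subproblem, I will use the variational inequality form of optimality, which avoids explicit manipulation of the normal cone $\mathcal{N}_{\calP}(\y^k)$.

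Write $h_i$ ($i\in\{1,2\}$) for the subtracted component of the relevant PSDC decomposition. First, since $\y^k$ minimizes the convex function $\x \mapsto g_i(\x) - \langle \nabla h_i(\x^k), \x\rangle$ over $\calP$ and $\x^k$ is feasible, evaluating the optimality inequality at $\x^k$ yields
$$g_i(\y^k) - g_i(\x^k) \le \langle \nabla h_i(\x^k), \y^k - \x^k\rangle = \langle \nabla h_i(\x^k), \mbf{d}^k\rangle.$$
Second, Assumption \ref{assumption:h} and differentiability of the polynomial $h_i$ supply the standard strong-convexity estimate
$$h_i(\y^k) - h_i(\x^k) \ge \langle \nabla h_i(\x^k), \mbf{d}^k\rangle + \tfrac{\rho}{2}\|\mbf{d}^k\|^2.$$
Subtracting the second from the first, the linear terms in $\nabla h_i(\x^k)$ cancel exactly and one obtains a descent estimate of the form $f(\y^k) - f(\x^k) \le -c\|\mbf{d}^k\|^2$.

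The only genuine subtlety is pinning down the constant $c$. The naive combination above yields $c = \rho/2$; to recover the sharper $c = \rho$ stated in the lemma, I will invoke the remark immediately following Assumption \ref{assumption:h}: any DC representation $f = g_i - h_i$ can be symmetrized by shifting $\tfrac{\rho}{2}\|\cdot\|^2$ between the two components so that $g_i$ inherits the same $\rho$-strong convexity as $h_i$. Replacing the plain convex optimality in step one by its strongly convex counterpart contributes an additional $\tfrac{\rho}{2}\|\mbf{d}^k\|^2$, giving the claimed bound.

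I do not anticipate any other obstacle: the argument essentially reproduces Proposition~1 of \cite{niu2019higher} in the present notation and is insensitive to whether the TPSDC or HDPSDC decomposition is used, since only the differentiability and $\rho$-strong convexity of the two DC components, together with feasibility of $\x^k$, enter the proof.
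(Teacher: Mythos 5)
Your proof is correct and follows essentially the same route as the source: the paper imports Lemma \ref{lem:f-descent} from \cite{niu2019higher} without reproving it, and your combination of the subproblem's optimality evaluated at the feasible point $\x^k$ with the $\rho$-strong convexity of $h_i$ is the standard derivation of \eqref{eq:f-descent}. You also correctly isolate the only delicate point: Assumption \ref{assumption:h} alone yields the constant $\rho/2$, and recovering the stated constant $\rho$ requires $g_i$ to be $\rho$-strongly convex as well, which is exactly what the symmetrized decomposition \eqref{eq:mrho} used in the paper provides.
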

\begin{theorem}\label{thm:convergence}
	Under Assumption \ref{assumption:h}, if the sequence $\{\x^k\}$ generated by BDCA$_\text{e}$ in Algorithm \ref{alg:BDCAe} when applied to problem \eqref{eq:P-dc} is bounded, then
	\begin{itemize}
		\item[(i)] The sequence $\{f(\x^k)\}$ is decreasing and converging to some finite $f_{opt}$.
		\item [(ii)] Every limit point of the sequence $\{\x^k\}$ is a critical point of problem \eqref{eq:P-dc}.
		\item[(iii)]$\sum_{k=0}^{\infty}\|\bs{d}^k\|^2<\infty$.
	\end{itemize}
\end{theorem}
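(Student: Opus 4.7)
The plan is to prove the three assertions in the natural order (i), (iii), (ii), since the summability bound in (iii) falls out of the descent used to establish (i), and (ii) relies on $\|\mbf{d}^k\|\to 0$ which is a direct consequence of (iii).

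\textbf{Part (i): Monotonic descent and convergence of the objective values.}
First I would observe that in Step 4 of Algorithm \ref{alg:BDCAe}, the exact line search admits $t=0$ as an admissible step, so by optimality of $t_k$ in \eqref{eq:uni-op} one has $f(\y^k+t_k\mbf{d}^k)\le f(\y^k)$, i.e., $f(\x^{k+1})\le f(\y^k)$. Combining this with Lemma \ref{lem:f-descent} yields the chain
\begin{equation}\label{eq:chain-descent}
f(\x^{k+1})\le f(\y^k)\le f(\x^k)-\rho\|\mbf{d}^k\|^2\le f(\x^k).
\end{equation}
Hence $\{f(\x^k)\}$ is nonincreasing. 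Proposition \ref{prop:delta} together with Lemma \ref{lem:sufficient-necessary} ensures $\x^{k+1}\in\calP$ for all $k$, so by the standing hypothesis $\inf_{\x\in\calP}f(\x)>-\infty$ the monotone sequence $\{f(\x^k)\}$ converges to some finite limit $f_{opt}$.

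\textbf{Part (iii): Summability.}
Telescoping \eqref{eq:chain-descent} from $k=0$ to $N-1$ gives
\begin{equation*}
\rho\sum_{k=0}^{N-1}\|\mbf{d}^k\|^2\le f(\x^0)-f(\x^N)\le f(\x^0)-f_{opt}<+\infty,
\end{equation*}
and letting $N\to+\infty$ yields $\sum_{k=0}^{+\infty}\|\mbf{d}^k\|^2<+\infty$; in particular $\|\mbf{d}^k\|\to 0$.

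\textbf{Part (ii): Criticality of limit points.}
Let $\x^*$ be a limit point of $\{\x^k\}$, and extract a subsequence $\x^{k_j}\to\x^*$. Since $\|\mbf{d}^{k_j}\|=\|\y^{k_j}-\x^{k_j}\|\to 0$, we also have $\y^{k_j}\to\x^*$, and $\x^*\in\calP$ by closedness of $\calP$. The first-order optimality of $\y^k$ in the convex subproblem of Step 1 reads
\begin{equation*}
\nabla h_i(\x^k)\in \nabla g_i(\y^k)+\mathcal{N}_{\calP}(\y^k),
\end{equation*}
where I use the fact that $g_i$ is a convex polynomial, hence differentiable with $\partial g_i=\{\nabla g_i\}$. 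Setting $\mbf{v}^k:=\nabla h_i(\x^k)-\nabla g_i(\y^k)\in\mathcal{N}_{\calP}(\y^k)$ and passing to the limit along $\{k_j\}$ using continuity of $\nabla g_i,\nabla h_i$ gives $\mbf{v}^{k_j}\to \nabla h_i(\x^*)-\nabla g_i(\x^*)$. The outer semicontinuity (closedness) of the polyhedral normal-cone mapping $\mathcal{N}_{\calP}$ then yields
\begin{equation*}
\nabla h_i(\x^*)-\nabla g_i(\x^*)\in\mathcal{N}_{\calP}(\x^*),
\end{equation*}
i.e., $\nabla h_i(\x^*)\in\nabla g_i(\x^*)+\mathcal{N}_{\calP}(\x^*)$, which is exactly the criticality condition recorded in Section \ref{sec:pre} for \eqref{eq:P-dc} with differentiable $h$.

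\textbf{Main obstacle.} The only nontrivial step is the limit-passage in (ii): one must make sure that (a) the acceleration step $\x^{k+1}=\y^k+t_k\mbf{d}^k$ keeps iterates in $\calP$ (handled by Proposition \ref{prop:delta}), and (b) the normal-cone inclusion survives the limit. Point (b) is standard for polyhedral $\calP$ because $\mathcal{N}_{\calP}$ has closed graph, but this is where the smoothness of $g_i$ and $h_i$ (they are polynomials) is essential so that $\nabla g_i(\y^{k_j})$ and $\nabla h_i(\x^{k_j})$ converge to the desired limits.
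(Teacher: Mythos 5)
Your proposal is correct and follows essentially the same route as the paper: the descent chain $f(\x^{k+1})\le f(\y^k)\le f(\x^k)-\rho\|\mbf{d}^k\|^2$ from Lemma \ref{lem:f-descent} plus admissibility of $t=0$ in the line search, telescoping for summability, and passage to the limit in the subproblem optimality condition along a convergent subsequence. The only cosmetic difference is that in part (ii) you phrase the limit passage via the closed graph of the polyhedral normal-cone mapping, whereas the paper takes the limit directly in the variational inequality $\langle\nabla g(\y^{k_j})-\nabla h(\x^{k_j}),\x-\y^{k_j}\rangle\ge 0$ for each fixed $\x\in\calP$; both are standard and equivalent here.
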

\begin{proof}(i)To simplify the notations, we use $g$ (resp. $h$) instead of $g_i$ (resp. $h_i$) for $i\in\{1,2\}$ in Algorithm \ref{alg:BDCAe}. Invoking \textbf{Step 4} and \textbf{Step 5} of Algorithm \ref{alg:BDCAe}, we deduce that
	$$f(\x^{k+1})\le f(\y^k).$$
	Combining the previous inequality and \eqref{eq:f-descent}, we derive that
	\begin{equation}\label{eq:d-fxk}
		f(\x^{k+1})\le f(\x^k)-\rho\|\bs{d}^k\|^2,
	\end{equation}
as $\{f(\x^k)\}$ is decreasing and $f$ is lower bounded over $\calP$, 
we conclude (i).\\
(ii) By the boundedness of $\{\x^k\}$, we know that convergent subsequences exist. Let $\x^*$ be a limit point of $\{\x^k\}$ and $\{\x^{k_j}\}$ be a subsequence of $\{\x^k\}$ converging to $\x^*$. As $k_j\rightarrow \infty$, we infer from  \eqref{eq:d-fxk} that 
$$\|\y^{k_j}-\x^{k_j}\|\to 0.$$
this, combined with the fact that $\|\y^{k_j}-\x^*\|\le\|\y^{k_j}-\x^{k_j}\|+\|\x^{k_j}-\x^*\|$, yields $\y^{k_j}\to \x^*$.
By invoking the fact that $$\y^{k_j}\in\argmin_{\x}\{g(\x)-\langle\nabla h(\x^{k_j}),\x\rangle,\x\in\calP\},$$ 
we can conclude, for all $\x\in\calP$, that:
$$\langle\nabla g(\y^{k_j})-\nabla h(\x^{k_j}),\x-\y^{k_j}\rangle\ge 0.$$
Taking $k_j\to \infty$, and noting that $g$ and $h$ are continuously differentiable, we obtain
	$$\langle\nabla f(\x^{*}),\x-\x^{*}\rangle\ge 0.$$
This implies that $\x^*$ is a critical point for problem \eqref{eq:P-dc}.\\
(iii)Summing inequality \eqref{eq:d-fxk} from $k=0$ to $\infty$, we obtain
	$$\sum_{k=0}^{\infty}\rho\|\bs{d}^k\|^2\le\sum_{k=0}^{\infty} ( f(\x^k)-f(\x^{k+1}))=f(\x^0)-\lim_kf(\x^{k+1})<\infty,$$
	therefore, we can conclue that  $\sum_{k=0}^{\infty}\|\bs{d}^k\|^2<\infty.$ 
	\qed
\end{proof}

\begin{remark}
Note that if we assume that the line search step size $t_k$ is upper bounded for all $k\ge 0$, then we will have a stronger convergence result based on \cite[Theorem 11]{niu2019higher} that the sequence $\{\x^k\}$ in BDCA$_\text{e}$ is also convergent.
\end{remark}

By employing the following useful lemma, we can obtain the rate of convergence of the sequence $\{f(\x^k)\}$ generated by Algorithm \ref{alg:BDCAe}. 
\begin{lemma}\label{lem:sk}
	Let $\{s_k\}$ be a nonincreasing and nonnegative real sequence converging to $0$. Suppose that there exist $\alpha\geq 0$ and $\beta>0$ such that for all large enough $k$,
	\begin{equation}
		\label{eq:seqrel-bis}
		s_{k+1}^{\alpha} \leq \beta (s_k-s_{k+1}).
	\end{equation}
	Then
	\begin{enumerate}
		\item[(i)] if $\alpha=0$, then the sequence $\{s_k\}$ converges to $0$ in a finite number of steps;
		\item[(ii)] if $\alpha\in (0,1]$, then the sequence $\{s_k\}$ converges linearly to $0$ with rate $\frac{\beta}{1+\beta}$.
		\item[(iii)] if $\alpha>1$, then the sequence $\{s_k\}$ converges sublinearly to $0$, i.e., there exists $\eta>0$ such that 
		\begin{equation}
			\label{eq:cvord-bis}
			s_k\leq \eta k^{\frac{1}{1-\alpha}}
		\end{equation}
		for large enough $k$.
	\end{enumerate}
\end{lemma}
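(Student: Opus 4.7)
The plan is to treat the three regimes separately, with the difficulty concentrated in case (iii).

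For case (i), if $s_{k+1} > 0$ then $s_{k+1}^{0} = 1$, so the hypothesis reads $1 \le \beta(s_k - s_{k+1})$, forcing a decrement of at least $1/\beta$ per step; since $\{s_k\}$ is nonnegative and converges to $0$, this can occur only finitely often, so $s_k$ must vanish in a finite number of steps. For case (ii), I would exploit that $s_k \to 0$ implies $s_{k+1} \le 1$ for all large $k$, whence $s_{k+1} \le s_{k+1}^{\alpha}$ for $\alpha \in ]0,1]$. Plugging this into the hypothesis and rearranging yields $s_{k+1} \le \frac{\beta}{1+\beta} s_k$, which is the claimed linear rate (the case $\alpha = 1$ needs no preliminary inequality and gives the same bound directly).

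For case (iii), the natural substitution is $u_k := s_k^{1-\alpha}$; since $1-\alpha < 0$ and $s_k \downarrow 0$, the sequence $\{u_k\}$ diverges to $+\infty$. The goal is to prove that $u_{k+1} - u_k \ge c$ for some $c>0$ and all large $k$, because then $u_k \ge c(k-k_0) + u_{k_0}$, and inverting the substitution delivers $s_k \le \eta\, k^{1/(1-\alpha)}$ with $\eta := (c/2)^{1/(1-\alpha)}$ (say). Using that $x \mapsto x^{-\alpha}$ is decreasing on $]0,+\infty[$ (or, equivalently, the mean value theorem applied to $x \mapsto x^{1-\alpha}$), one obtains the basic estimate
$$u_{k+1} - u_k \;=\; s_{k+1}^{1-\alpha} - s_k^{1-\alpha} \;\ge\; (\alpha-1)\, s_k^{-\alpha}\,(s_k - s_{k+1}).$$

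The main obstacle is that the hypothesis controls $s_k - s_{k+1}$ from below by $s_{k+1}^{\alpha}/\beta$, not by $s_k^{\alpha}/\beta$, so the bound above cannot be used directly unless $s_{k+1}$ is comparable to $s_k$. I would resolve this with a dichotomy. If $s_{k+1} \ge s_k/2$, then $s_{k+1}^{\alpha} \ge s_k^{\alpha}/2^{\alpha}$, so the hypothesis gives $s_k^{-\alpha}(s_k - s_{k+1}) \ge 1/(\beta 2^{\alpha})$, and therefore $u_{k+1} - u_k \ge (\alpha-1)/(\beta 2^{\alpha})$. If instead $s_{k+1} < s_k/2$, then $u_{k+1} = s_{k+1}^{1-\alpha} \ge 2^{\alpha-1}\, s_k^{1-\alpha} = 2^{\alpha-1}\, u_k$, so $u_{k+1} - u_k \ge (2^{\alpha-1} - 1)\, u_k$, which exceeds any prescribed constant once $u_k$ is large enough (and $u_k \to +\infty$ guarantees this). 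Taking $c$ to be the smaller of the two constants yields the required uniform lower bound on the increments of $\{u_k\}$ and completes the argument.
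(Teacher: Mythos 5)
Your proof is correct and follows essentially the same route as the paper's: cases (i) and (ii) are identical, and for (iii) you telescope $s_k^{1-\alpha}$ with a dichotomy on whether $s_{k+1}$ is comparable to $s_k$, exactly as the paper does (you fix the threshold at $s_k/2$ and invoke the mean value theorem, where the paper parametrizes the split by $\tau>1$ and uses an integral comparison of $t^{-\alpha}$ --- cosmetic differences). The only detail worth adding is the trivial subcase where $s_k=0$ after finitely many steps, which the paper disposes of separately before working with $s_k^{1-\alpha}$.
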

\begin{proof}
	(i) If $\alpha=0$, then \eqref{eq:seqrel-bis} implies that
	$$0\leq s_{k+1}\leq s_k - \frac{1}{\beta}.$$
	It follows by $s_k\to 0$ and $\frac{1}{\beta}>0$ that $\{s_k\}$ converges to $0$ in a finite number of steps, and we can estimate the number of steps as:
	$$0\leq s_{k+1} \leq s_{k}-\frac{1}{\beta} \leq s_{k-1}-\frac{2}{\beta}\leq \cdots \leq s_{N}-\frac{k-N+1}{\beta}.$$
	Hence $$k\leq \beta s_N+N-1.$$
	(ii) If $\alpha \in (0,1]$. Since $s_k\to 0$, we have that $s_k<1$ for large enough $k$. Thus, $s_{k+1}\leq s_k<1$, and it follows by \eqref{eq:seqrel-bis} that 
	$$s_{k+1} \leq s_{k+1}^{\alpha} \leq \beta (s_k-s_{k+1})$$
	for large enough $k$. Hence
	$$s_{k+1}\leq \left(\frac{\beta}{1+\beta}\right) s_k,$$
	i.e., $\{s_k\}$ converges linearly to $0$ with rate $\frac{\beta}{1+\beta}$ for large enough $k$.\\
	(iii) If $\alpha>1$, then we have two cases:\\
	\textbf{Case 1:} if $\{x^k\}$ converges in a finite number of steps, then the inequality \eqref{eq:cvord-bis} trivially holds. \\
	\textbf{Case 2:} Otherwise, $s_k>0$ for all $k\in \N$. Let $\phi(t)=t^{-\alpha}$ and $\tau > 1$.\\
	(a) Suppose that $\phi(s_{k+1})\leq \tau \phi(s_k)$. By the decreasing of $\phi(t)$ and $s_k\geq s_{k+1}$, we have 
	$$\phi(s_k)(s_k-s_{k+1}) \leq \int_{s_{k+1}}^{s_k} \phi(t)~\mathrm{d} t = \frac{1}{1-\alpha}(s_{k}^{1-\alpha}-s_{k+1}^{1-\alpha}).$$
	It follows from \eqref{eq:seqrel-bis} that 
	$$\frac{1}{\beta} \leq \phi(s_{k+1})(s_k-s_{k+1}) \leq \tau\phi(s_{k})(s_k-s_{k+1})  \leq \frac{\tau}{\alpha-1}(s_{k+1}^{1-\alpha} - s_{k}^{1-\alpha}).$$
	Hence
	\begin{equation}
		\label{eq:recineq01}
		s_{k+1}^{1-\alpha} - s_{k}^{1-\alpha} \geq \frac{\alpha-1}{\beta\tau}
	\end{equation}
	for large enough $k$. \\
	(b) Suppose that $\phi(s_{k+1})\geq \tau \phi(s_k)$. Taking $q:=\tau^{-\alpha^{-1}}\in (0,1)$, then
	$$s_{k+1} \leq q s_k.$$
	Hence $$s_{k+1}^{1-\alpha} \geq q^{1-\alpha} s_k^{1-\alpha}.$$
	It follows that $\exists N>0, \forall k\geq N$:
	$$s_{k+1}^{1-\alpha} - s_{k}^{1-\alpha} \geq (q^{1-\alpha}-1) s_k^{1-\alpha} \geq (q^{1-\alpha}-1) s_N^{1-\alpha}.$$
	In both cases, there exist a constant $\zeta := \min \{(q^{1-\alpha}-1) s_N^{1-\alpha}, \frac{\alpha-1}{\beta\tau}\}$ such that for large enough $k$, we have 
	$$s_{k+1}^{1-\alpha} - s_{k}^{1-\alpha} \geq \zeta.$$
	Summing for $k$ from $N$ to $M-1>N$, we have 
	$$s_{M}^{1-\alpha} - s_{N}^{1-\alpha} \geq \zeta (M-N).$$
	Then, by the decreasing of $t^{\frac{1}{1-\alpha}}$, we get 
	$$s_{M} \leq \left( s_{N}^{1-\alpha} +\zeta (M-N) \right)^{\frac{1}{1-\alpha}}.$$
	Hence, there exist some $\eta>0$ such that 
	$$s_{M}\leq \eta M^{\frac{1}{1-\alpha}}$$
	for large enough $M$, which completes the proof.
\qed\end{proof}

We prove the convergence rate of the sequence $\{f(\x^k)\}$ generated by BDCA$_\text{e}$ as the sequence $\{\x^k\}$ has a limit point $\x^*$ at which $\tilde{f}=f+\chi_{\calP}$ satisfies the Kurdyka--\L ojasiewicz (KL) property \cite{frankel2015splitting} and $\nabla h$ is locally Lipschitz continuous. 
\begin{theorem}\label{thm:f-dk}
Under the assumptions of Theorem \ref{thm:convergence}, let  $\{\x^k\},\{\y^k\}$ be sequences generated by BDCA$_\text{e}$ from a starting point $\x^0\in\calP$, and $\mathcal{V}$ be the set of limit points of $\{\x^k\}$. Suppose that $\tilde{f}:=f+\chi_{\calP}$ satisfies the KL property at $\x^*\in\mathcal{V}$ with $\varphi(s)=Ms^{1-\theta}$ for some $M>0$ and $0\le \theta<1$. Then
the following statements hold:
\begin{enumerate}
    \item [(i)] If $\theta=0$, then the sequence $\{f(\x^k)\}$ converges to $f_{opt}$ in a finite number of steps.
    \item [(ii)] If $\theta \in (0, 1/2]$, then the sequence $\{f(\x^k)\}$ converges linearly to $f_{opt}$, that is, there exist positive constants $N_0, \eta_0$ and $0<q<1$ such that $f(\x^k)-f_{opt}\le \eta_0q^k$ for all $k \ge N_0$.
    \item [(iii)] If $\theta\in (1/2, 1)$, then the sequence $\{f(\x^k)\}$ converges sublinearly to $f_{opt}$, that is, there exist positive constants $\eta_0$ and $N_0$ such that $f(\x^k)-f_{opt}\le \eta_0 k^{-\frac{1}{2\theta-1}} $ for all $k \ge N_0$.
\end{enumerate}
\end{theorem}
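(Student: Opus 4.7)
The plan is to set $s_k := f(\x^k) - f_{opt}$ and derive a recurrence of the form $s_{k+1}^{2\theta} \le \beta(s_k - s_{k+1})$ valid for all sufficiently large $k$, after which Lemma \ref{lem:sk} applied with $\alpha = 2\theta$ delivers the three conclusions simultaneously: $\alpha = 0$ gives finite termination (case (i)), $\alpha = 2\theta \in (0,1]$ gives linear rate $\beta/(1+\beta)$ (case (ii)), and $\alpha = 2\theta \in (1,2)$ gives the sublinear exponent $\frac{1}{1-\alpha} = -\frac{1}{2\theta-1}$ (case (iii)).

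The two ingredients I need are a sufficient-decrease inequality and a relative-error subgradient bound. For the decrease, I recycle the proof of Theorem \ref{thm:convergence}: combining Lemma \ref{lem:f-descent} with $f(\x^{k+1}) \le f(\y^k)$ (the exact line search from $\y^k$ in Steps~4--5 of Algorithm \ref{alg:BDCAe} can only improve the objective) yields
$$s_k - s_{k+1} \ge \rho \|\mbf{d}^k\|^2.$$
For the relative-error bound, the first-order optimality condition of the subproblem in Step~1 reads $\nabla h(\x^k) - \nabla g(\y^k) \in \mathcal{N}_{\calP}(\y^k)$; combining this with the identity $\partial^F \tilde{f}(\y^k) = \nabla g(\y^k) - \nabla h(\y^k) + \mathcal{N}_{\calP}(\y^k)$ shows that $\nabla h(\x^k) - \nabla h(\y^k) \in \partial^F \tilde{f}(\y^k)$. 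Boundedness of $\{\x^k\}$ together with $\|\mbf{d}^k\|\to 0$ from Theorem \ref{thm:convergence}(iii) confines all iterates to a compact set on which $\nabla h$ is Lipschitz with some constant $L$, so
$$\text{dist}(0, \partial^F \tilde{f}(\y^k)) \le L \|\mbf{d}^k\|.$$

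Next I would invoke the KL inequality at $\y^k$ using $\varphi(s) = Ms^{1-\theta}$, for which $\varphi'(s) = M(1-\theta) s^{-\theta}$. A standard uniformized-KL argument on the limit set $\mathcal{V}$ (on which $\tilde{f}$ is constant equal to $f_{opt}$ by Theorem \ref{thm:convergence}) produces a single neighborhood and a single exponent that apply to all $\y^k$ from some index $N_0$ onward. Provided $f(\y^k) > f_{opt}$ for all $k \ge N_0$ (otherwise the sequence has already reached $f_{opt}$ in finitely many steps, which is subsumed in case (i)), the KL inequality together with the relative-error bound gives
$$L\|\mbf{d}^k\| \ge \text{dist}(0, \partial^F \tilde{f}(\y^k)) \ge \frac{1}{M(1-\theta)}\bigl(f(\y^k) - f_{opt}\bigr)^{\theta}.$$
Squaring, using $s_{k+1} = f(\x^{k+1}) - f_{opt} \le f(\y^k) - f_{opt}$, and combining with the sufficient-decrease inequality, one obtains $s_{k+1}^{2\theta} \le \beta (s_k - s_{k+1})$ for some positive constant $\beta$, and Lemma \ref{lem:sk} closes the argument.

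The main obstacle I anticipate is the \emph{uniformization} of the KL property along the tail of the sequence: the limit set $\mathcal{V}$ may be a nontrivial continuum, and one must show that a single desingularizing function $\varphi$ and a single neighborhood work simultaneously for a whole tail of iterates, rather than a different one at each point. This is a standard compactness argument using $\text{dist}(\y^k, \mathcal{V}) \to 0$ (a consequence of boundedness of $\{\x^k\}$ and $\|\mbf{d}^k\|\to 0$), but it is the only nontrivial technical step beyond the algebraic manipulations sketched above.
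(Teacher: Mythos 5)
Your proposal is correct and follows essentially the same route as the paper's own proof: the same sufficient-decrease inequality from Lemma \ref{lem:f-descent} plus the line search, the same subgradient bound $\nabla h(\x^k)-\nabla h(\y^k)\in\partial^F\tilde f(\y^k)$ with a Lipschitz constant obtained by a compactness/finite-subcover argument on $\mathcal{V}$, and the same reduction to Lemma \ref{lem:sk} with $\alpha=2\theta$ and $\beta=(ML)^2(1-\theta)^2/\rho$. The uniformization step you flag as the main obstacle is handled in the paper exactly as you sketch it.
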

\begin{proof} We conclude that $f$ shares the same value over $\mathcal{V}$. As $f$ is continuously differentiable, this follows directly from 
$$f_{opt}=\lim_{k\rightarrow \infty}f(\x^k)=\lim_{j\rightarrow \infty}f(\x^{k_j})=f(\x^*),$$
where the subsequence $\{\x^{k_j}\}\rightarrow \x^*\in\mathcal{V}.$

Given that $\nabla h$ is locally Lipschitz continuous, for each $\x^*\in\mathcal{V}$, there exist some constants $L_{\x^*}\ge0$ and $\delta^{\prime}_{\x^*}>0$ satisfying
\begin{equation}\label{eq:local-lipz}
  \|\nabla h(\x)-\nabla h(\y)\|\le L_{\x^*}\|\x-\y\|,~~\forall \x,\y\in\mathbb{B}(\x^*,\delta^{\prime}_{\x^*}).  
\end{equation}
Furthermore, as $\tilde{f}$ satisfies the Kurdyka--\L ojasiewicz property at $\x^*\in\mathcal{V}$, there exists some constant $\delta^{\prime\prime}_{\x^*}>0$, $\forall \x\in\mathbb{B}(\x^*,\delta^{\prime\prime}_{\x^*})$ with $f(\x^*)<f(\x)<f(\x^*)+\eta$ such that the Kurdyka--\L ojasiewicz inequality holds. Let $\delta_{\x^*}=\min\{\delta^{\prime}_{\x^*},\delta^{\prime\prime}_{\x^*}\},$ we can construct an open cover of the set $\mathcal{V}$: 
$$\mathcal{V}\subset\bigcup_{\x^*\in\mathcal{V}}\mathbb{B}(\x^*,\delta_{\x^*}/4).$$
Since the sequence $\{\x^k\}$ is bounded, then $\mathcal{V}$ is a compact set. Thus, a finite number of points $\bs{v}_1,\ldots,\bs{v}_l\in\mathcal{V}$ exist such that $$\mathcal{V}\subset \bigcup_{i=1}^{l}\mathbb{B}(\bs{v}_i,\delta_{\bs{v}_i}/4).$$ 
Let $L=\max_{1\le i\le l}L_{\bs{v}_i}$ and $\delta=\min_{1\le i\le l}\delta_{\bs{v}_i}/2.$ Invoking the fact from Theorem \ref{thm:convergence} that $\{\x^k\}$ and $\{\y^k\}$ share the same set of limit points $\mathcal{V}$, we obtain $N_1>0$ such that $\{\x^k\}\in\bigcup_{i=1}^{l}\mathbb{B}(\bs{v}_i,\delta_{\bs{v}_i}/2)$ and $\|\x^k-\y^k\|\le\delta$ for all $k\ge N_1$. Thus, for all $k\ge N_1$, there exists some $\bs{v}_i$ such that $\x^k, \y^k\in\mathbb{B}(\bs{v}_i,\delta_{\bs{v}_i}).$ Combined with \eqref{eq:local-lipz}, this implies that for all $k>N_1$
\begin{equation}\label{eq:xk-yk-lipschitz}
    \|\nabla h(\x^k)-\nabla h(\y^k)\|\le L_{\bs{v}_i}\|\x^k-\y^k\|\le L\|\x^k-\y^k\|.
\end{equation}
On the other hand, since $\lim_{k\rightarrow \infty}f(\x^k)=f_{opt},f(\x^k)\ge f_{opt}$, there exist $N_2>0$ and $\eta>0$ such that $f_{opt}<f(\x^k)<f_{opt}+\eta$ for all $k\ge N_2$. Setting $N=\max\{N_1,N_2\}$ and for each $k\ge N$, we conclude that there exists some $\bs{v}_i\in\mathcal{V}$ such that $\x^k, \y^k\in\mathbb{B}(\bs{v}_i,\delta_{\bs{v}_i})$. 
Hence, for each $\y^k\in\mathbb{B}(\bs{v}_i,\delta_{\bs{v}_i})$ and $k\ge N$, we have $$f_{opt}<f(\y^k)\le f(\x^k)<f_{opt}+\eta.$$
Since $\tilde{f}$ satisfies the Kurdyka--\L ojasiewicz property and $\tilde{f}(\x)=f(\x)$ over $\calP$, we obtain 
\begin{equation}\label{eq:KL-eq}
 \varphi^{\prime}(\tilde{f}(\y^k)-\tilde{f}(\x^*))\text{dist}(0,\partial^{F}\tilde{f}(\y^k))\ge 1.
\end{equation}
Because $\y^k\in\argmin_{\x}\{g(\x)-\langle\nabla h(\x^k),\x\rangle,\x\in\calP\},$
we get that
	$$\nabla h(\x^k)=\nabla g(\y^k)+\bs{u}_k, \bs{u}_k\in N_{\calP}(\y^k),$$
using the previous equation, we obtain $$\nabla h(\y^k)-\nabla h(\x^k)=\nabla h(\y^k)-\nabla g(\y^k)-\bs{u}_k\in-\partial^{F}\tilde{f}(\y^k).$$
Then it follows from \eqref{eq:xk-yk-lipschitz} that
\begin{equation}
\label{eq:dist-xk-yk}
\text{dist}(0,\partial^{F}\tilde{f}(\y^k))\le\|\nabla h(\y^k)-\nabla h(\x^k)\|\le L\|\x^k-\y^k\|.
\end{equation}

Combining \eqref{eq:KL-eq} and \eqref{eq:dist-xk-yk} with $\varphi(s)=Ms^{1-\theta}$, we can obtain for all $k\ge N$ that
\begin{equation}\label{eq:theta-xk-yk}
    (f(\y^k)-f_{opt})^{\theta}\le ML(1-\theta)\|\x^k-\y^k\|.
\end{equation}
For all $k\ge N$, it follows from $f(\y^k)-f_{opt}\ge f(\x^{k+1})-f_{opt}\ge0$, \eqref{eq:d-fxk} and \eqref{eq:theta-xk-yk} that
\begin{equation*}
    \begin{split}
    (f(\x^{k+1})-f_{opt})^{2\theta}\le&(f(\y^k)-f_{opt})^{2\theta}\\
    \le&(ML)^2(1-\theta)^2\|\x^k-\y^k\|^2\\
    \le&\frac{(ML)^2(1-\theta)^2}{\rho}(f(\x^k)-f(\x^{k+1}))\\
    =&\frac{(ML)^2(1-\theta)^2}{\rho}((f(\x^k)-f_{opt})-(f(\x^{k+1})-f_{opt})).
    \end{split}
\end{equation*}
Hence, setting $s_k=f(\x^k)-f_{opt}$ and $\tau=\frac{(ML)^2(1-\theta)^2}{\rho}>0$,  we can obtain 
\begin{equation*}
    s_{k+1}^{2\theta}\le\tau(s_k-s_{k+1})~~\text{for all }k\ge N.
\end{equation*}
Invoking Lemma \ref{lem:sk}, we can conclude (i), (ii) and (iii) immediately.
\qed\end{proof}

\section{An efficient method for subproblem \eqref{eq:subproblem}}\label{sec:FDPG}
In this section, we focus on how to solve subproblem \eqref{eq:subproblem} by introducing separable block structure into the function $g(\x)$ and leveraging the fast dual-based proximal gradient (FDGP) method \cite{beck2014fast}. FDGP amounts to computing the vector projection on the polyhedral set and evaluating a proximal mapping in each iteration. Particularly, evaluating the proximal mapping is equivalent to finding the unique root of a strictly convex univariate polynomial, which can be performed in parallel due to the separable block structure. The vector projection on the polyhedral set can also be computed efficiently by splitting the equality and inequality constraints.

Without loss of generality, let us consider the objective function of problem \eqref{eq:HD-PSDC} with a $\rho$-strongly convex DC decomposition as 
	\begin{equation}\label{eq:mrho}
		f(\x)=\underbrace{\|\bs{A}^{+}\x+\bs{b}^{+}\|_{d_f}^{d_f}+\frac{\rho}{2}\|\x\|^2}_{g(\x)}-\underbrace{\|\bs{A}^{-}\x+\bs{b}^{-}\|_{d_f}^{d_f}+\frac{\rho}{2}\|\x\|^2}_{h(\x)}.
	\end{equation}
The convex subproblem \eqref{eq:subproblem} reads 
	\begin{equation}\label{eq:convex_subpro}
		\min_{\x\in\calP}\big\{\|\bs{A}^{+}\x+\bs{b}^{+}\|_{d_f}^{d_f}+\frac{\rho}{2}\|\x\|^2-\langle\nabla h(\x^k),\x\rangle\big\},
	\end{equation}
where $\bs{A}^{+}=(\bs{a}_1^+,\bs{a}_2^+,\ldots,\bs{a}_{m_1}^+)^{\top}$ and $\bs{b}^+=(b_1^+,b_2^+,\ldots,b_{m_1}^+)^{\top}.$ 

Now, we will discuss the solution method for \eqref{eq:subproblem} by supposing $\chi_{\calP}$ to be prox-friendly or not respectively.
\paragraph{\textbf{Case 1: $\chi_{\calP}(\cdot)$ is prox-friendly.}} 
By incorporating $\chi_{\calP}(\x)$ into the objective function of \eqref{eq:convex_subpro}, we get 
	\begin{equation}\label{eq:pf}
		\min_{\x\in \R^n}\{\psi(\x)+\varphi( \bs{A}^+\x)\},\tag{P$_\text{f}$}
	\end{equation}
	where
	\begin{equation*}
		\begin{cases}
			\varphi( \bs{A}^+\x)=\sum_{i=1}^{m_1}\varphi_i(\langle\bs{a}_i^+,\x\rangle) \text{ with } \varphi_i(\langle\bs{a}_i^+,\x\rangle)=(\langle\bs{a}_i^+,\x\rangle+b_i)^{d_f},i=1,\ldots,m_1,\\
			\psi(\x)=\frac{\rho}{2}\|\x\|^2-\langle\nabla h(\x^k),\x\rangle+\chi_{\calP}(\x).
		\end{cases}
	\end{equation*} 
	\begin{algorithm}[ht!]
		\label{alg:FDPG-primal}
		\caption{FDPG method for \eqref{eq:pf}}
			 \KwIn{$\bs{w}^0=\z^0,s_0=1$, $L\ge \frac{\|\bs{A}^+\|^2}{\rho}$ and $\varepsilon^{\prime}>0.$}
	\For{$l=0,1,\ldots$}{
			\textbf{Step 1:} $\bs{u}^l=\proj\nolimits_{\calP}(\frac{1}{\rho}(\nabla h(\x^k)+(\bs{A}^+)^{\top}\bs{w}^l));$\\
			\textbf{Step 2:} $v_i^l=\prox\nolimits_{L\varphi_i}((\bs{a}_i^+)^{\top}\bs{u}^l-Lw_i^l), i=1,\ldots,m_1;$\\
			\textbf{Step 3:} $\z^{l+1}=\bs{w}^l-\frac{1}{L}(\bs{A}^+\bs{u}^l-\bs{v}^l);$\\
			\textbf{Step 4:} If $\|\z^{l+1}-\z^l\|/(1+\|\z^l\|)<\varepsilon^{\prime},$ \Return $\z^{l+1}$;\\
			\textbf{Step 5:} $s_{l+1}=\frac{1+\sqrt{1+4s_l^2}}{2};$\\
			\textbf{Step 6:} $\bs{w}^{l+1}=\z^{l+1}+(\frac{s_l-1}{s_{l+1}})(\z^{l+1}-\z^l).$
			}
\end{algorithm}
By introducing the linear constraint $\bs{A}^+\x-\bs{\zeta}=0$, the Lagrangian is
\begin{equation*}
	L(\x,\bs{\zeta};\z)=\psi(\x)+\varphi(\bs{\zeta})-\langle\bs{A}^+\x-\bs{\zeta},\z\rangle
\end{equation*}
with $\z=(z_1,z_2,\ldots,z_{m_1})^{\top}.$ Then the Lagrangian dual problem of \eqref{eq:pf} is given by 
\begin{equation}\label{eq:dual-problem}
		\min_{\z}\{\Psi(\z)+\Phi(\z)\},\tag{D$_\text{f}$}
\end{equation}
where $\Psi(\z)=\psi^*((\bs{A}^+)^{\top}\z),\Phi(\z)=\varphi^*(-\z).$
Applying the renowned FISTA \cite{beck2009fast} to \eqref{eq:dual-problem}, we get the FDPG method for \eqref{eq:pf} described in Algorithm \ref{alg:FDPG-primal}.

Some comments on Algorithm \ref{alg:FDPG-primal} are summarized below:
\begin{itemize}
    \item [(i)] In the settings of \textbf{Input}, we choose the initial point $\z^0=-\nabla\varphi(\bs{A}^+\x^k)$ by solving the KKT system with respect to $\bs{\zeta}$ and $\z$ at $\x^k$. 

\item[(ii)] In \textbf{Step 1},  the projection on $\calP$ (e.g., simplex or box) is easy to compute by assuming that $\chi_{\calP}(\cdot)$ is prox-friendly.

\item[(iii)] In \textbf{Step 2}, the proximal operator of all strictly convex functions $\varphi_i$ is computed by finding the unique real root of its derivative.  This can be achieved through a closed-form solution using the root formula when $d_f\le 5$, or through numerical computation using Newton's method when $d_f>5$. Notably, this step can be performed in parallel.

\end{itemize}
The convergence result of the sequence $\{\bs{u}^l\}$ is given bellow.
\begin{lemma}[See \cite{beck2014fast}]
	\label{lem:convergence-rate-xl}
	Suppose that $\{\bs{u}^l\}_{l\ge 0}$ is the sequence generated by Algorithm \ref{alg:FDPG-primal}. Then for the unique optimal solution $\x_{opt}$ of problem \eqref{eq:pf} and any optimal solution $\z_{opt}$ of problem \eqref{eq:dual-problem} and $l\ge 1$,
	$$\|\bs{u}^l-\x_{opt}\|\le2\sqrt{\frac{L}{\rho}}\frac{\|\z^0-\z_{opt}\|}{l+1}.$$	
\end{lemma}
Hence, by setting $L=\frac{\|\bs{A}^+\|^2}{\rho}$ in Algorithm \ref{alg:FDPG-primal}, we can simplify from Lemma \ref{lem:convergence-rate-xl} that 
\begin{equation}
	\|\bs{u}^l-\x_{opt}\|\le2 \frac{\|\bs{A}^+\|}{\rho}\frac{\|\z^0-\z_{opt}\|}{l+1}.
\end{equation}
\begin{lemma}\label{lem:A-c} Let $\bs{A}^{+}$ be defined in problem \eqref{eq:HD-PSDC} and $\bs{c}$ be the coefficient vector of its objective function $f(\x)$. Then
\begin{equation}\label{eq:a-C}
    \|\bs{A}^+\|\le \|\bs{L}_{n,d_f}\|\|\whV^{-1}(n,d_f)\|^{\frac{1}{d_f}}\|\bs{c}\|_{\infty}^{\frac{1}{d_f}},
\end{equation}
where $\bs{L}_{n,d_f}=(\balpha^1,\ldots,\balpha^{s_{n,d_f}})^{\top}$ and $d_f=2\lceil \frac{d}{2} \rceil$.
\end{lemma}
\begin{proof}
Following from Proposition \ref{cor:oddevenhomogenization} and \eqref{eq:HD-PSDC}, let us denote by $\bs{\lambda}=(\whV^{-1}(n,d_f))^{\top}\bs{c}$, then we have 
\begin{equation*}
  \begin{pmatrix}
  \bs{A}^{+} ~& \bs{b}^{+}\\
  \bs{A}^{-} ~& \bs{b}^{-}
  \end{pmatrix}=\bs{L}_{n,d_f}\begin{pmatrix}
  |\lambda_1|^{\frac{1}{d_f}}&&\\
  &\ddots&\\
  &&|\lambda_{s_{n,d}}|^{\frac{1}{d_f}}
  \end{pmatrix}.
\end{equation*}
Then according to the Cauchy interlacing theorem \cite{parlett1998symmetric}, we get
$$\|\bs{A}^{+}\|\le\|\begin{pmatrix}
  \bs{A}^{+} ~& \bs{b}^{+}\\
  \bs{A}^{-} ~& \bs{b}^{-}
  \end{pmatrix}\|\le\|\bs{L}_{n,d_f}\|(\|\bs{\lambda}\|_{\infty})^{\frac{1}{d_f}}.$$
Combining with $\|\bs{\lambda}\|_{\infty}\le \|\whV^{-1}(n,d_f)\|\|\bs{c}\|_{\infty}$, we get the desired conclusion.
\qed\end{proof}
Lamma \ref{lem:A-c} establishes the connection between $\|\bs{A}^{+}\|$ and the coefficients $\bs{c}$ of $f(\x)$. When $n$ and $d_f$ are given, then $\bs{L}_{n,d_f}$ and $\whV(n,d_f)$ are fixed, hence $$\|\bs{A}^+\| = O\left(\|\bs{c}\|_{\infty}^{\frac{1}{d_f}}\right).$$

\paragraph{\textbf{Case 2: $\chi_{\calP}(\cdot)$ is not prox-friendly.}} We split  $\calP$ into two parts: the inequalities $\calP_I$ and the equalities $\calP_E$ as
$$\calP_I=\{\x:\bs{A}\x\le\bs{b}\} \quad \text{ and }\quad \calP_E=\{\x:\bs{P}\x=\bs{q}\}$$
with $\bs{A}=(\bs{a}_1,\ldots,\bs{a}_m)^{\top},\bs{b}=(b_1,\ldots,b_m)^{\top}$ and $\bs{P}=(\bs{p}_1,\ldots,\bs{p}_r)^{\top},\bs{q}=(q_1,\ldots,q_r)^{\top}.$ Here, we assume that $\bs{P}$ has full row rank (otherwise, we use Gauss's elimination to get an equivalence equality constraint in full row rank). 
Thus, the indicator function $\chi_{\calP_I}$ can be rewritten as $$\chi_{\calP_I}(\x)=\chi_{\text{Box}[-\infty,\bs{b}]}(\bs{A}\x),$$
and problem \eqref{eq:convex_subpro} reads
\begin{equation}\label{eq:npf}
		\min_{\x}\{\psi(\x)+\varphi( \bs{A}^+\x)+\chi_{\text{Box}[-\infty,\bs{b}]}(\bs{A}\x)\},\tag{P$_\text{nf}$}
	\end{equation}
	where
	\begin{equation*}
		\begin{cases}
			\varphi( \bs{A}^+\x)=\sum_{i=1}^{m_1}\varphi_i(\langle\bs{a}_i^+,\x\rangle) \text{ with } \varphi_i(\langle\bs{a}_i^+,\x\rangle)=(\langle\bs{a}_i^+,\x\rangle+b_i)^{d_f},i=1,\ldots,m_1,\\
			\psi(\x)=\frac{\rho}{2}\|\x\|^2-\langle\nabla h(\x^k),\x\rangle+\chi_{\calP_E}(\x).
		\end{cases}
	\end{equation*} 
The FDPG can also be applied for solving \eqref{eq:npf} as in Case 1. The differences include:  
\begin{itemize}
    \item [(i)] In \textbf{Step 1}, we compute $w^l\to u^{l+\frac{1}{2}} \to u^{l}$ as 
\begin{equation*}
		\begin{cases}
		    \bs{u}^{l+\frac{1}{2}}=\frac{1}{\rho}\left[\nabla h(\x^k)+\begin{pmatrix}
                    \bs{A}^+\\
                    \bs{A}
                    \end{pmatrix}^{\top}\bs{w}^l\right];\\
            \bs{u}^l=\bs{u}^{l+\frac{1}{2}}-\bs{P}^{\top}(\bs{P}\bs{P}^{\top})^{-1}(\bs{P}\bs{u}^{l+\frac{1}{2}}-\bs{q}).
		\end{cases}
\end{equation*}
\item [(ii)] In \textbf{Step 2}, there are some extra proximal operators to compute as
$$v_i^l=\begin{cases}
    \prox\nolimits_{L\varphi_i}((\bs{a}_i^+)^{\top}\bs{u}^l-Lw_i^l),& i=1,\ldots,m_1;\\
   \min\{\bs{a}_i^{\top}\bs{u}^l-Lw_i^l,~b_i\},& i=m_1+1,\ldots,m_1+m.  
\end{cases}$$

\item [(iii)] In \textbf{Step 3}, the $\z^{l+1}$ is updated by:
$$\z^{l+1}=\bs{w}^l-\frac{1}{L}\left[\begin{pmatrix}
                    \bs{A}^+\\
                    \bs{A}
        \end{pmatrix}\bs{u}^l-\bs{v}^l\right].$$
\end{itemize}
Note that similar method can also be viable for solving the convex subproblem \eqref{eq:subproblem} for \eqref{eq:T-PSDC}.
\section{Numerical experiment}\label{sec:numerical-experiment}
In this section, we test BDCA$_\text{e}$ against the classical DCA and BDCA \cite{niu2019higher} applied to our new PSDC decompositions, UDCA \cite{Niu2011} and UBDCA \cite{niu2019higher} (with projective DC decomposition) as well as \texttt{FILTERSD} \cite{filtersdFletcher} and \texttt{FMINCON} for solving the higher-order moment portfolio optimization model and the polynomial optimization problem with box constraint. All experiments are performed on $\text{MATLAB}$ R2021a with a laptop equipped with Intel Core i5-1035G1 CPU 1.19GHz and 8GB of RAM. According to Lemma \ref{lem:psrep}, the gradient computation of any form $\varphi(\x)\in\bbH_d[\x]$ is given by
\begin{equation}
 \label{eq:grad}
\nabla \varphi(\x)=d\sum\nolimits_{\balpha\in \calI_{n,d}}\lambda_{\balpha}\langle\balpha, \x\rangle^{d-1}\balpha.
\end{equation}
\subsection{The Mean-Variance-Skewness-Kurtosis portfolio optimization problem}
Let $\bs{r}$ = $(r_1, r_2,\ldots,r_n)^{\top}$ be a random return vector of $n$ risky assets, $\x$ be the $n$-dimensional decision vector, where $x_i$ is the percentage invested in the $i$-th risky asset, and $\E$ be the expectation operator. The moments of the return are defined as the following: the mean of the return denoted by $\bs{\mu}$ $\in$ $\R^n$ and $\forall i \in \{1,\ldots,n\}$,$\mu_i = \E(r_i);$ the variance of the return denoted by $\bs{V}$ $\in$ $\R^{n^2}$ and $\forall (i, j)\in \{1,\ldots,n\}^2$, $V_{ij} = \E\left([r_i - \E(r_i)][r_j - \E(r_j)]\right);$ the skewness of the return denoted by $\bs{S}$ $\in$ $\R^{n^3}$ and $\forall (i, j, k)\in \{1,\ldots,n\}^3$, $S_{ijk} = \E\left([r_i -\E(r_i)][r_j - \E(r_j)][r_k - \E(r_k)]\right);$ the kurtosis of the return denoted by $\bs{K}$ $\in\R^{n^4}$ and $\forall (i, j, k, l)\in \{1,\ldots,n\}^4$, $K_{ijkl} = \E([r_i - \E(r_i)][r_j - \E(r_j)][r_k - \E(r_k)][r_l - \E(r_l)]).$
The mean-variance-skewness-kurtosis portfolio model can be studied as a weighted nonconvex polynomial minimization problem (see, e.g., \cite{niu2019higher,Niu2011}): \begin{equation}\label{eq:MVSK}
    	\min_{\x} \{f(\x) : \bs{e}^{\top}\x=1,~\x\ge\bs{0},~\x\in\R^n\},
\tag{MVSK}
\end{equation}
where $$f(\x) := -\omega_1\sum_i\mu_i x_i+\omega_2\sum_{i,j} V_{ij}x_ix_j-\omega_3\sum_{i,j,k}S_{ijk}x_ix_jx_k+\omega_4\sum_{i,j,k,l}K_{ijkl}x_ix_jx_kx_l$$ with the parameter $\bs{\omega}=(\omega_1,\omega_2,\omega_3,\omega_4)\in\R^4_{+}$ being the investor's preference. Note that for some large $n$ (e.g., $n=100$), the total number of monomials in kurtosis can reach up to millions. However, due to the symmetry of $\bs{V}$ (resp. $\bs{S}$ and $\bs{K}$), the computation complexity on $\bs{V}$ (resp. $\bs{S}$ and $\bs{K}$) can be reduced from $n^2$ (resp. $n^3$ and $n^4$) to $\binom{n+1}{2}$ (resp. $\binom{n+2}{3}$ and $\binom{n+3}{4}$), see e.g., \cite{Maringer2009GlobalOO,Niu2011}. 

\textbf{Data generation:} We test on the dataset collected from monthly records from 1995 to 2015 of 48 industry-sector portfolios of the U.S. stock market with the number of assets $n\in\{11,16,21,26,31,36,41,46\}$. For each $n$, we test problems with three different investor's preferences, including risk-seeking $\bs{\omega}=(10,1,10,1)$, risk-aversing $\bs{\omega}= (1, 10, 1, 10)$ and risk-neutral $\bs{\omega}=(10,10,10,10)$.

\textbf{Setup:} The initial points are generated using MATLAB function \verb|rand|: we first set \verb|x0=rand(n,1)|, then compute \verb|x0=x0/sum(x0)| to get a feasible initial point $\x^0\in\calP$. The gradients required in DCA, BDCA and BDCA$_\text{e}$ are computed by Equation \eqref{eq:grad} and the gradients required in UDCA, UBDCA, \texttt{FILTERSD} and \texttt{FMINCON} for model \eqref{eq:MVSK} are computed by the next formula (see e.g., \cite{ATHAYDE2003243,Niu2011}):
$$\nabla f(\x)=-\omega_1\bs{\mu}+2\omega_2\bs{V}\x-3\omega_3\bs{S}(\x\otimes\x)+4\omega_4\bs{K}(\x\otimes\x\otimes\x).$$ We terminate DCA, BDCA and BDCA$_\text{e}$ with PSDC decomposition when 
\begin{equation*}
\begin{split}
     &\|\y^k-\x^k\|/(1+\|\x^k\|)\le 10^{-3}.\\
\end{split}
\end{equation*}
Algorithm \ref{alg:FDPG-primal} for \eqref{eq:subproblem} is terminated if $$\|\z^{l+1}-\z^l\|/(1+\|\z^l\|)\le 10^{-3}.$$
UDCA and UBDCA are terminated if 
\begin{equation*}
     \|\y^k-\x^k\|/(1+\|\x^k\|)\le 10^{-5}.
\end{equation*}
\texttt{FILTERSD} and \texttt{FMINCON} are terminated with default parameters.\\
The initial step size for Armijo line search is computed as in \cite{niu2019higher} by $$t=\min\left\{\min_{i\in\mathcal{I}(\bs{d}^k)}\{\frac{b_i-\langle\bs{a}_i,\y^k\rangle}{\langle\bs{a}_i,\bs{d}^k\rangle}\},\frac{\sqrt{2}}{\|\bs{d}^k\|}\right\}.$$ 
We reduce $t$ by $t=\beta t$ with $\beta=0.8$ until  
$$f(\x^{k+1})-f(\y^k)\le-\sigma t^2\|\bs{d}^k\|^2$$
is verified, where $\sigma=10^{-3}$ is used. 

\paragraph{\textbf{Numerical results}}:
In Table \ref{tab:tablet}--\ref{tab:tableo}, The objective values (\verb|obj|), the CPU time (\verb|time| in seconds), the number of iterations (\verb|iter|), the average CPU time and the average number of iterations are reported by comparing BDCA$_\text{e}$, DCA and BDCA applied to \eqref{eq:T-PSDC} and \eqref{eq:HD-PSDC}, UDCA and UBDCA with projective DC decomposition, and the solvers \texttt{FILTERSD} and \texttt{FMINCON} on problem \eqref{eq:MVSK}.  

We observe that BDCA$_\text{e}$ is as fast as BDCA for solving \eqref{eq:HD-PSDC}, and both are the fastest methods among the others. Regarding the objective values, \texttt{FILTERSD} often provides the best ones. BDCA$_\text{e}$ for both \eqref{eq:T-PSDC} and \eqref{eq:HD-PSDC} shares the same objective values as \texttt{FILTERSD} in most of the cases (15/24) with the difference of order $O(10^{-5})$. Moreover, BDCA$_\text{e}$ and BDCA for both \eqref{eq:T-PSDC} and \eqref{eq:HD-PSDC} almost always require fewer iterations and result in better objective values than UDCA and UBDCA. This implies that the quality of the proposed two PSDC decompositions is better than the projective DC decomposition. Concerning the CPU time, BDCA$_\text{e}$ for \eqref{eq:HD-PSDC} is almost as fast as BDCA \eqref{eq:HD-PSDC} and \texttt{FMINCON}, and at least $2$ times faster than DCA and UDCA. We conclude that BDCA$_\text{e}$ and BDCA for \eqref{eq:T-PSDC} and \eqref{eq:HD-PSDC} are promising approaches for solving \eqref{eq:MVSK}.

\begin{sidewaystable}[htbp]
\centering
\resizebox{0.8\linewidth}{!}{
 \begin{tabular}{c|c|*{3}{c}|*{3}{c}|*{3}{c}}
   \toprule
 \multirow{3}*{}&\multirow{3}*{$n$}&\multicolumn{9}{c}{Algorithms for solving \eqref{eq:T-PSDC}}\\
  \cline{3-11}&&\multicolumn{3}{c|}{DCA }&\multicolumn{3}{c|}{BDCA }& \multicolumn{3}{c}{BDCA$_\text{e}$ }\\
    \cline{3-11}&&iter&time&obj&iter&time&obj&iter&time&obj\\
     \midrule
1&11&113&0.04&\textbf{-8.705e-02}&19&0.02&\textbf{-8.705e-02}&10&0.01&\textbf{-8.705e-02}\\
2&11&80&0.05&7.378e-03&22&0.03&7.354e-03&12&0.02&7.348e-03\\
3&11&90&0.05&-9.608e-02&15&0.02&-9.608e-02&11&0.02&-9.609e-02\\
4&16&24&0.04&\textbf{-1.646e-01}&15&0.04&\textbf{-1.646e-01}&12&0.04&\textbf{-1.646e-01}\\
5&16&89&0.13&8.466e-03&23&0.05&8.391e-03&13&0.05&8.391e-03\\
6&16&63&0.07&-1.075e-01&20&0.04&\textbf{-1.076e-01}&15&0.03&\textbf{-1.076e-01}\\
7&21&119&0.24&\textbf{-1.302e-01}&17&0.05&\textbf{-1.302e-01}&12&0.04&\textbf{-1.302e-01}\\
8&21&58&0.14&5.191e-03&23&0.07&5.194e-03&16&0.10&5.192e-03\\
9&21&78&0.17&\textbf{-1.079e-01}&19&0.09&\textbf{-1.079e-01}&14&0.07&\textbf{-1.079e-01}\\
10&26&118&0.49&\textbf{-1.047e-01}&23&0.13&\textbf{-1.047e-01}&16&0.14&\textbf{-1.047e-01}\\
11&26&85&0.40&2.689e-03&26&0.17&2.687e-03&16&0.18&2.649e-03\\
12&26&65&0.28&-9.899e-02&13&0.08&-9.898e-02&10&0.07&\textbf{-9.902e-02}\\
13&31&56&0.39&\textbf{-1.649e-01}&17&0.16&\textbf{-1.649e-01}&12&0.18&\textbf{-1.649e-01}\\
14&31&94&0.76&1.139e-03&22&0.24&1.051e-03&17&0.32&1.057e-03\\
15&31&66&0.48&-1.142e-01&15&0.14&\textbf{-1.143e-01}&11&0.15&\textbf{-1.143e-01}\\
16&36&59&0.74&\textbf{-1.649e-01}&17&0.29&\textbf{-1.649e-01}&14&0.27&\textbf{-1.649e-01}\\
17&36&93&1.31&8.497e-04&23&0.44&7.928e-04&16&0.55&7.838e-04\\
18&36&55&0.72&\textbf{-1.057e-01}&20&0.40&\textbf{-1.057e-01}&15&0.31&\textbf{-1.057e-01}\\
19&41&58&1.11&\textbf{-1.650e-01}&17&0.43&\textbf{-1.650e-01}&12&0.41&\textbf{-1.650e-01}\\
20&41&77&1.67&1.409e-03&29&0.84&1.412e-03&22&0.95&1.354e-03\\
21&41&72&1.52&\textbf{-1.145e-01}&21&0.72&\textbf{-1.145e-01}&14&0.44&\textbf{-1.145e-01}\\
22&46&58&2.14&\textbf{-1.650e-01}&18&0.79&\textbf{-1.650e-01}&14&0.74&\textbf{-1.650e-01}\\
23&46&96&3.96&7.829e-04&23&1.20&7.303e-04&19&1.31&7.138e-04\\
24&46&72&2.74&-1.145e-01&25&1.40&\textbf{-1.146e-01}&14&0.98&\textbf{-1.146e-01}\\
\hline
\multicolumn{2}{c|}{average}&77&0.82&&20&0.33&&14&0.31&\\
\bottomrule
\end{tabular}}
\caption{Numerical results of DCA, BDCA an BDCA$_{\text{e}}$ with with $\rho=1$ for problem \eqref{eq:T-PSDC}; Bold values for best results in objective function.}\label{tab:tablet}
\end{sidewaystable}

\begin{sidewaystable}[htbp]
 \centering
 \resizebox{0.8\linewidth}{!}{
 \begin{tabular}{c|c|*{3}{c}|*{3}{c}|*{3}{c}}
   \toprule
 \multirow{3}*{}&\multirow{3}*{$n$}&\multicolumn{9}{c}{Algorithms for solving \eqref{eq:HD-PSDC}}\\
  \cline{3-11}&&\multicolumn{3}{c|}{DCA }&\multicolumn{3}{c|}{BDCA }& \multicolumn{3}{c}{BDCA$_\text{e}$ }\\
    \cline{3-11}&&iter&time&obj&iter&time&obj&iter&time&obj\\
     \midrule
1&11&113&0.03&\textbf{-8.705e-02}&18&0.01&\textbf{-8.705e-02}&9&0.01&\textbf{-8.705e-02}\\
2&11&74&0.02&7.378e-03&23&0.01&7.355e-03&13&0.01&7.348e-03\\
3&11&86&0.02&-9.608e-02&13&0.00&-9.608e-02&9&0.01&-9.608e-02\\
4&16&22&0.02&\textbf{-1.646e-01}&15&0.02&\textbf{-1.646e-01}&13&0.02&\textbf{-1.646e-01}\\
5&16&89&0.05&8.466e-03&23&0.03&8.391e-03&13&0.03&8.391e-03\\
6&16&60&0.03&-1.075e-01&19&0.03&\textbf{-1.076e-01}&14&0.02&\textbf{-1.076e-01}\\
7&21&119&0.19&\textbf{-1.302e-01}&17&0.04&\textbf{-1.302e-01}&12&0.04&\textbf{-1.302e-01}\\
8&21&58&0.09&5.191e-03&22&0.05&5.194e-03&15&0.06&5.190e-03\\
9&21&78&0.11&\textbf{-1.079e-01}&19&0.05&\textbf{-1.079e-01}&14&0.04&\textbf{-1.079e-01}\\
10&26&118&0.40&\textbf{-1.047e-01}&23&0.11&\textbf{-1.047e-01}&16&0.11&\textbf{-1.047e-01}\\
11&26&85&0.31&2.689e-03&26&0.14&2.687e-03&19&0.16&2.647e-03\\
12&26&65&0.23&-9.899e-02&13&0.07&-9.898e-02&10&0.05&\textbf{-9.902e-02}\\
13&31&56&0.35&\textbf{-1.649e-01}&17&0.15&\textbf{-1.649e-01}&12&0.16&\textbf{-1.649e-01}\\
14&31&94&0.65&1.139e-03&22&0.20&1.051e-03&17&0.28&1.057e-03\\
15&31&66&0.42&-1.142e-01&15&0.12&\textbf{-1.143e-01}&11&0.13&\textbf{-1.143e-01}\\
16&36&59&0.66&\textbf{-1.649e-01}&17&0.24&\textbf{-1.649e-01}&14&0.23&\textbf{-1.649e-01}\\
17&36&93&1.16&8.497e-04&23&0.38&7.928e-04&16&0.44&7.838e-04\\
18&36&55&0.62&\textbf{-1.057e-01}&20&0.35&\textbf{-1.057e-01}&15&0.27&\textbf{-1.057e-01}\\
19&41&58&1.04&\textbf{-1.650e-01}&17&0.40&\textbf{-1.650e-01}&12&0.36&\textbf{-1.650e-01}\\
20&41&77&1.49&1.409e-03&29&0.77&1.412e-03&22&0.81&1.354e-03\\
21&41&72&1.32&\textbf{-1.145e-01}&21&0.64&\textbf{-1.145e-01}&13&0.38&\textbf{-1.145e-01}\\
22&46&58&1.96&\textbf{-1.650e-01}&18&0.74&\textbf{-1.650e-01}&14&0.66&\textbf{-1.650e-01}\\
23&46&96&3.56&7.829e-04&23&1.02&7.303e-04&20&1.15&7.138e-04\\
24&46&72&2.50&-1.145e-01&25&1.29&\textbf{-1.146e-01}&16&0.95&\textbf{-1.146e-01}\\
\hline
\multicolumn{2}{c|}{average}&76&0.72&&20&0.29&&14&0.27&\\
\bottomrule
\end{tabular}}
\caption{Numerical results of DCA, BDCA and BDCA$_{\text{e}}$ with $\rho=1$ for problem \eqref{eq:HD-PSDC}; Bold values for best results in objective function.}\label{tab:tableh}
\end{sidewaystable}

\begin{sidewaystable}[htbp]
 \centering
\resizebox{0.9\linewidth}{!}{
 \begin{tabular}{c|c|*{3}{c}|*{3}{c}|*{2}{c}|*{2}{c}}
 \toprule
 \multirow{2}*{}&\multirow{2}*{$n$}&\multicolumn{3}{c|}{UDCA}&\multicolumn{3}{c|}{UBDCA}&\multicolumn{2}{c|}{\texttt{FILTERSD}}&\multicolumn{2}{c}{\texttt{FMINCON}}\\
\cline{3-12}&&iter&time&obj&iter&time&obj&time&obj&time&obj\\
\midrule
1&11&404&0.01&\textbf{-8.705e-02}&25&0.00&\textbf{-8.705e-02}&0.00&\textbf{-8.705e-02}&0.01&\textbf{-8.705e-02}\\
2&11&1900&0.13&7.345e-03&51&0.02&\textbf{7.343e-03}&0.01&\textbf{7.343e-03}&0.02&7.346e-03\\
3&11&1698&0.06&\textbf{-9.611e-02}&65&0.01&\textbf{-9.611e-02}&0.00&\textbf{-9.611e-02}&0.01&\textbf{-9.611e-02}\\
4&16&84&0.00&\textbf{-1.646e-01}&26&0.00&\textbf{-1.646e-01}&0.01&\textbf{-1.646e-01}&0.03&\textbf{-1.646e-01}\\
5&16&6983&0.42&8.410e-03&86&0.05&8.381e-03&0.03&\textbf{8.377e-03}&0.02&8.381e-03\\
6&16&4065&0.21&\textbf{-1.076e-01}&85&0.04&\textbf{-1.076e-01}&0.02&\textbf{-1.076e-01}&0.02&\textbf{-1.076e-01}\\
7&21&1653&0.22&\textbf{-1.302e-01}&50&0.03&\textbf{-1.302e-01}&0.04&\textbf{-1.302e-01}&0.04&\textbf{-1.302e-01}\\
8&21&5609&0.85&5.163e-03&69&0.03&5.136e-03&0.03&\textbf{5.108e-03}&0.03&5.113e-03\\
9&21&8290&1.01&-1.078e-01&82&0.04&\textbf{-1.079e-01}&0.06&\textbf{-1.079e-01}&0.07&\textbf{-1.079e-01}\\
10&26&3814&0.89&\textbf{-1.047e-01}&75&0.04&\textbf{-1.047e-01}&0.05&\textbf{-1.047e-01}&0.04&\textbf{-1.047e-01}\\
11&26&10382&2.64&3.018e-03&91&0.10&2.768e-03&0.07&\textbf{2.610e-03}&0.07&2.617e-03\\
12&26&9422&2.05&-9.884e-02&79&0.04&-9.895e-02&0.06&\textbf{-9.902e-02}&0.08&-9.901e-02\\
13&31&2114&1.00&\textbf{-1.649e-01}&75&0.07&\textbf{-1.649e-01}&0.16&\textbf{-1.649e-01}&0.09&\textbf{-1.649e-01}\\
14&31&12294&6.18&1.771e-03&104&0.13&1.595e-03&0.11&\textbf{1.042e-03}&0.12&1.050e-03\\
15&31&8182&3.77&-1.140e-01&113&0.15&-1.140e-01&0.10&\textbf{-1.143e-01}&0.14&\textbf{-1.143e-01}\\
16&36&4129&3.95&\textbf{-1.649e-01}&100&0.22&\textbf{-1.649e-01}&0.16&\textbf{-1.649e-01}&0.22&\textbf{-1.649e-01}\\
17&36&9511&9.32&1.604e-03&114&0.28&1.589e-03&0.29&\textbf{7.721e-04}&0.27&7.816e-04\\
18&36&10888&10.38&-1.048e-01&124&0.48&-1.053e-01&0.48&\textbf{-1.057e-01}&0.37&\textbf{-1.057e-01}\\
19&41&4934&8.00&\textbf{-1.650e-01}&114&0.41&\textbf{-1.650e-01}&0.95&\textbf{-1.650e-01}&0.31&\textbf{-1.650e-01}\\
20&41&13390&22.20&3.186e-03&125&0.54&3.166e-03&0.79&\textbf{1.349e-03}&1.07&1.360e-03\\
21&41&12452&20.06&-1.134e-01&148&2.07&-1.135e-01&0.56&\textbf{-1.145e-01}&0.72&\textbf{-1.145e-01}\\
22&46&7394&18.74&\textbf{-1.650e-01}&132&0.77&\textbf{-1.650e-01}&1.18&\textbf{-1.650e-01}&0.78&\textbf{-1.650e-01}\\
23&46&9783&25.16&2.354e-03&137&0.98&2.302e-03&1.37&\textbf{7.006e-04}&1.58&7.126e-04\\
24&46&15278&38.82&-1.122e-01&157&1.97&-1.125e-01&1.88&\textbf{-1.146e-01}&1.25&\textbf{-1.146e-01}\\
\hline
\multicolumn{2}{c|}{average}&6861&7.35&&93&0.35&&0.35&&0.31&\\
\bottomrule
\end{tabular}}
\caption{Numerical results of UDCA, UBDCA, \texttt{FILTERSD} and \texttt{FMINCON} for \eqref{eq:MVSK}; Bold values for best results in objective function.}\label{tab:tableo}
\end{sidewaystable} 
Fig. \ref{fig:solver_log_cpu_time_new} illustrates the $\log$ average CPU time along the number of assets $n$ from the results in Table \ref{tab:tablet}--\ref{tab:tableo}. We observe that the DCA, BDCA, and BDCA$_\text{e}$ algorithms, when applied to \eqref{eq:HD-PSDC}, exhibit faster performance compared to their counterparts applied to \eqref{eq:T-PSDC}. Furthermore, UDCA has the worst performance over all methods. \texttt{FILTERSD} is the fastest method when $n\leq 40$. As $n>40$, the best performance is given by BDCA$_\text{e}$ for \eqref{eq:HD-PSDC}, which demonstrates that BDCA$_\text{e}$ should be a promising approach on large-scale settings.
\begin{figure}[ht!]
	\centering
	\includegraphics[width=10cm,height=6cm]{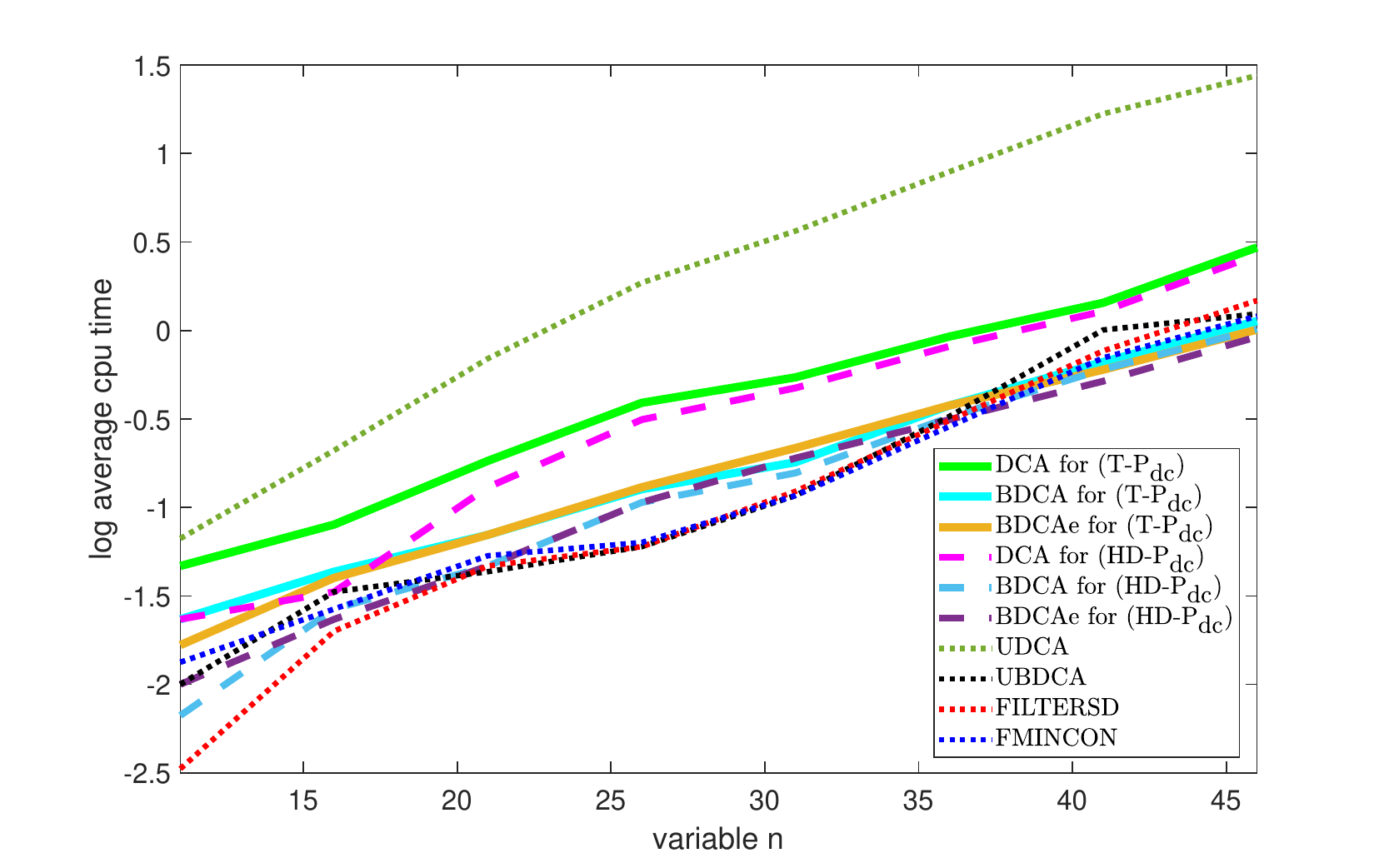}
	\caption{Log average CPU time along $n$ for results of DCA, BDCA and BDCA$_\text{e}$ for \eqref{eq:T-PSDC} and \eqref{eq:HD-PSDC} as well as \texttt{FILTERSD} and \texttt{FMINCON} reported in Tables \ref{tab:tablet}--\ref{tab:tableo}.}
	\label{fig:solver_log_cpu_time_new}
\end{figure}
\subsection{Polynomial optimization problem with box constraints}\label{sec:box}
We consider the box-constrained polynomial optimization problem(see \cite{anjos2011handbook,de2017improved}):
\begin{equation}\label{eq:Box}
    	\min_{\x} \{f(\x): \bs{l}\le\x\le \bs{u},~\x\in\R^n\}
\tag{P$_\text{box}$}
\end{equation}
where $f(\x)$ is a multivariate polynomial on $\R_d[\x]$ and $\bs{l},\bs{u}$ are lower and upper bound of $\x$. We compare BDCA$_\text{e}$ against the classical DCA and BDCA \cite{niu2019higher} applied to T-PSDC and HD-PSDC decompositions, respectively, as well as \texttt{FILTERSD} \cite{filtersdFletcher} and \texttt{FMINCON} for solving  \eqref{eq:Box}.

\textbf{Data generation:} The objective functions $f(\x)$ in our experiments are randomly generated in $\R_d[\x]$, whose coefficients are uniformly distributed in the interval $[-1,1]$, with a specified density parameter $den\in (0,1]$. Additionally, we set $[\bs{l},\bs{u}]=[-\bs{e},\bs{e}]$.

\textbf{Setup:} The initial points are randomly generated by MATLAB function \verb|rand|. We start by setting \verb|x0 = rand(n,1)|, and then transform it with \verb|x0 = -1 + 2 * x0| to obtain a feasible initial point $\x^0 \in \calP$.

The gradients required in DCA, BDCA and BDCA$_\text{e}$ are computed by Equation \eqref{eq:grad} and the gradients required in \texttt{FILTERSD} and \texttt{FMINCON} are approximated by $$\frac{\partial f}{\partial x_i}=(f(\x+\delta\bs{e}_i)-f(\x-\delta\bs{e}_i))/(2\delta),i=1,\ldots,n,$$
where $\delta=10^{-3}$. We terminate DCA, BDCA and BDCA$_\text{e}$ when 
\begin{equation*}
\begin{split}
     &\|\y^k-\x^k\|/(1+\|\x^k\|)\le 5\times 10^{-4}.\\
\end{split}
\end{equation*}
Algorithm \ref{alg:FDPG-primal} for \eqref{eq:subproblem} is terminated if $$\|\z^{l+1}-\z^l\|/(1+\|\z^l\|)\le 5\times10^{-5}.$$
The setting for the Armijo line search is the same as in \eqref{eq:MVSK}.  

\paragraph{\textbf{Numerical results:}} For each triple $(n,d,den)$ with $n\in\{10,20,30,40,50\},d\in\{3,4\}$ and $den\in\{0.1,0.4,0.7,1\}$, we generate $10$ independent trials and compare five methods (namely, DCA, BDCA, BDCA$_{e}$ with HD-PSDC decompositions, \texttt{FILTERSD} and \texttt{FMINCON}) in terms of the average CPU time (in seconds) over the $10$ trials.
\begin{figure}[ht!]
 	\centering
 	\subfigure{
 	\includegraphics[width=0.48\linewidth]{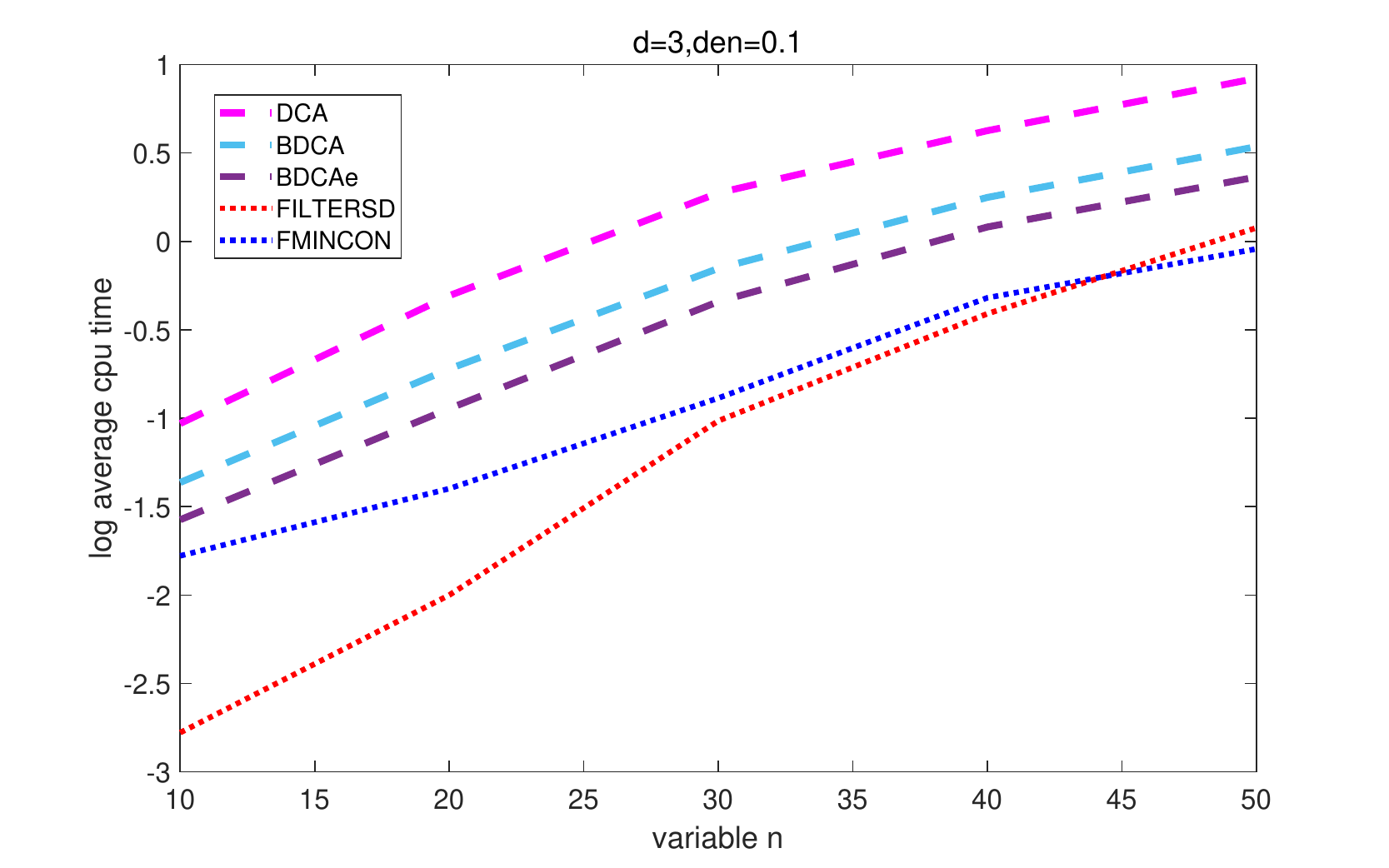}
 	\label{fig:d301}}
 	\subfigure{
 	\includegraphics[width=0.48\linewidth]{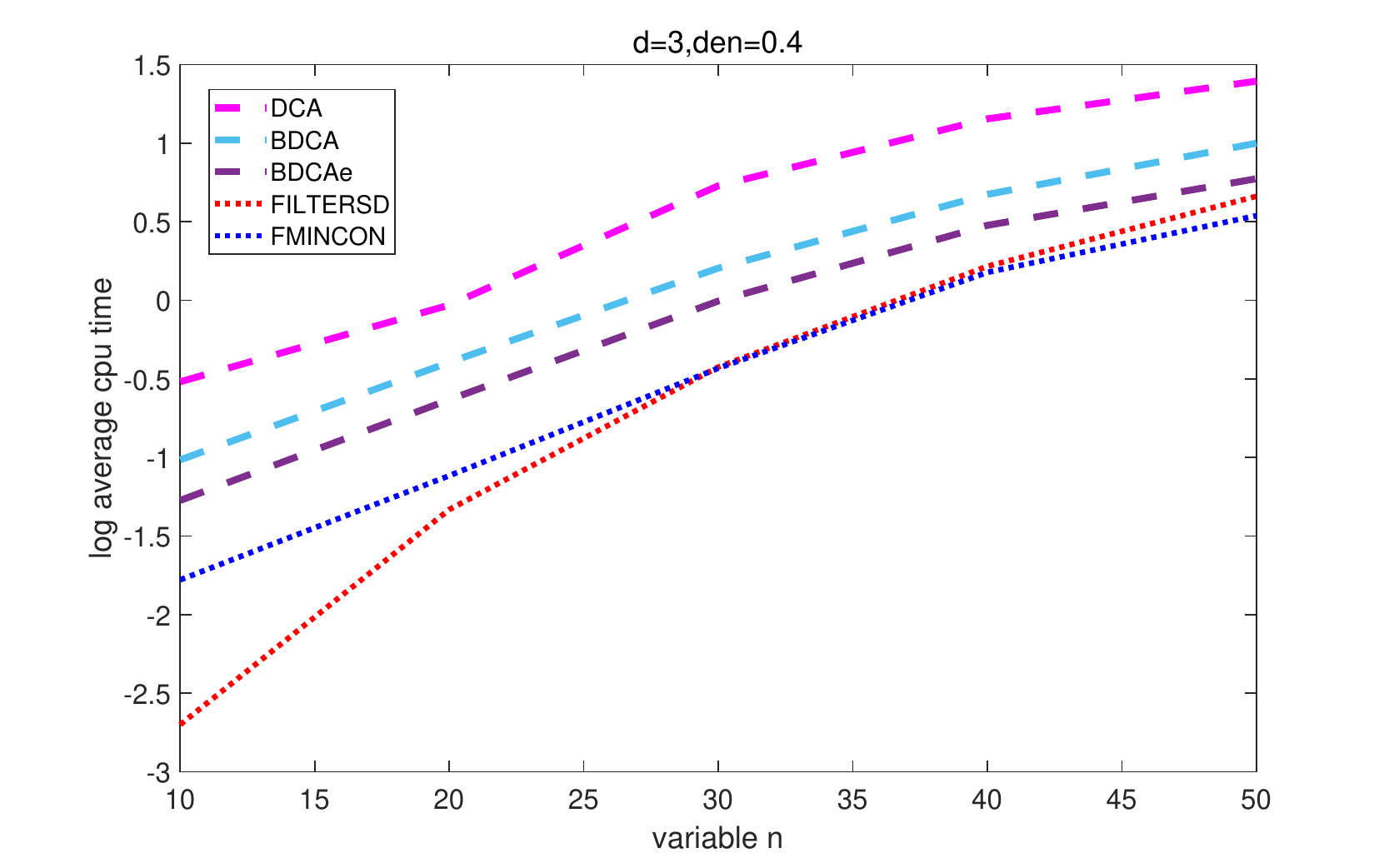}\label{fig:d304}}
 	\subfigure{
 	\includegraphics[width=0.48\linewidth]{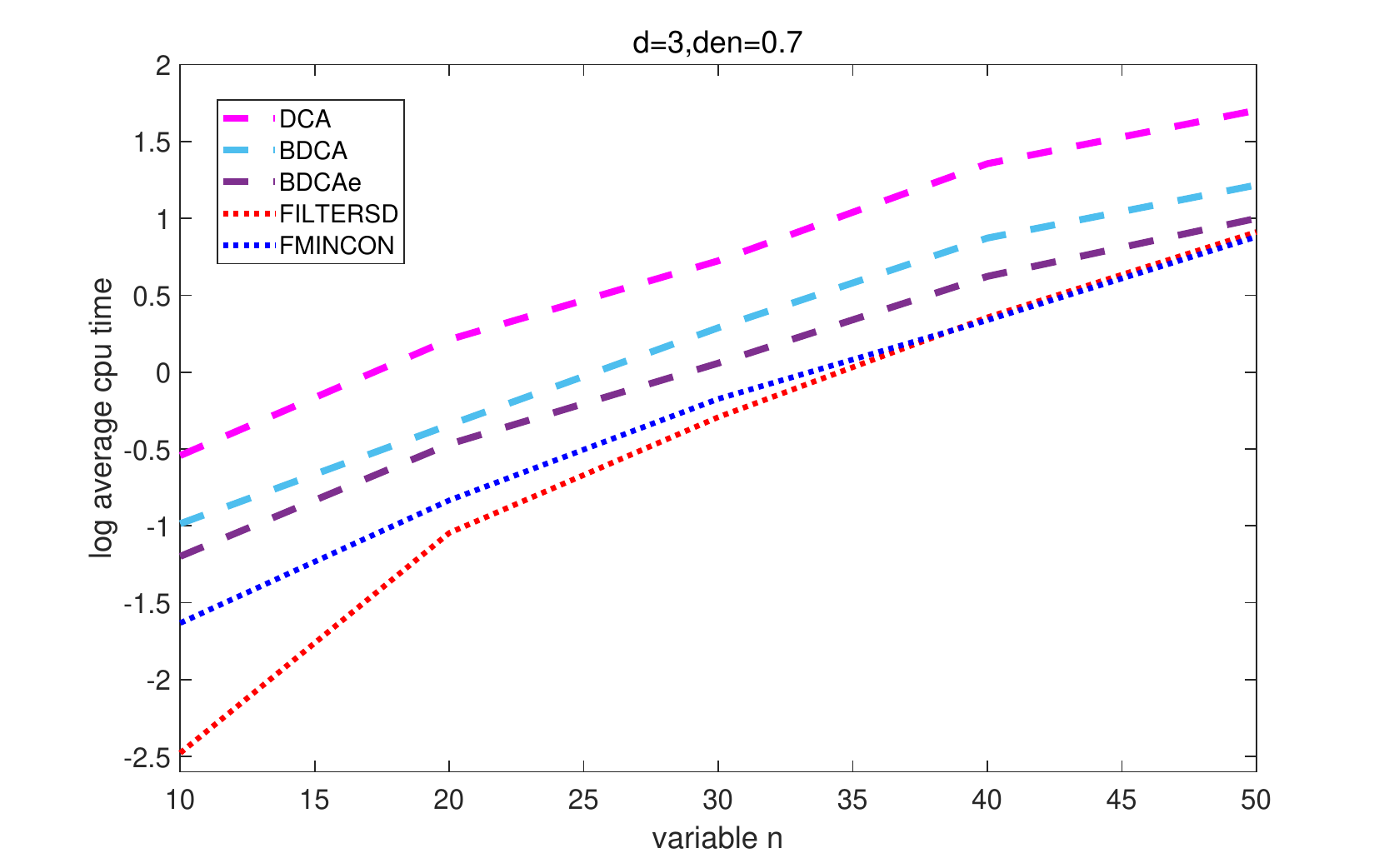}
 	\label{fig:d307}}
 	\subfigure{
 	\includegraphics[width=0.48\linewidth]{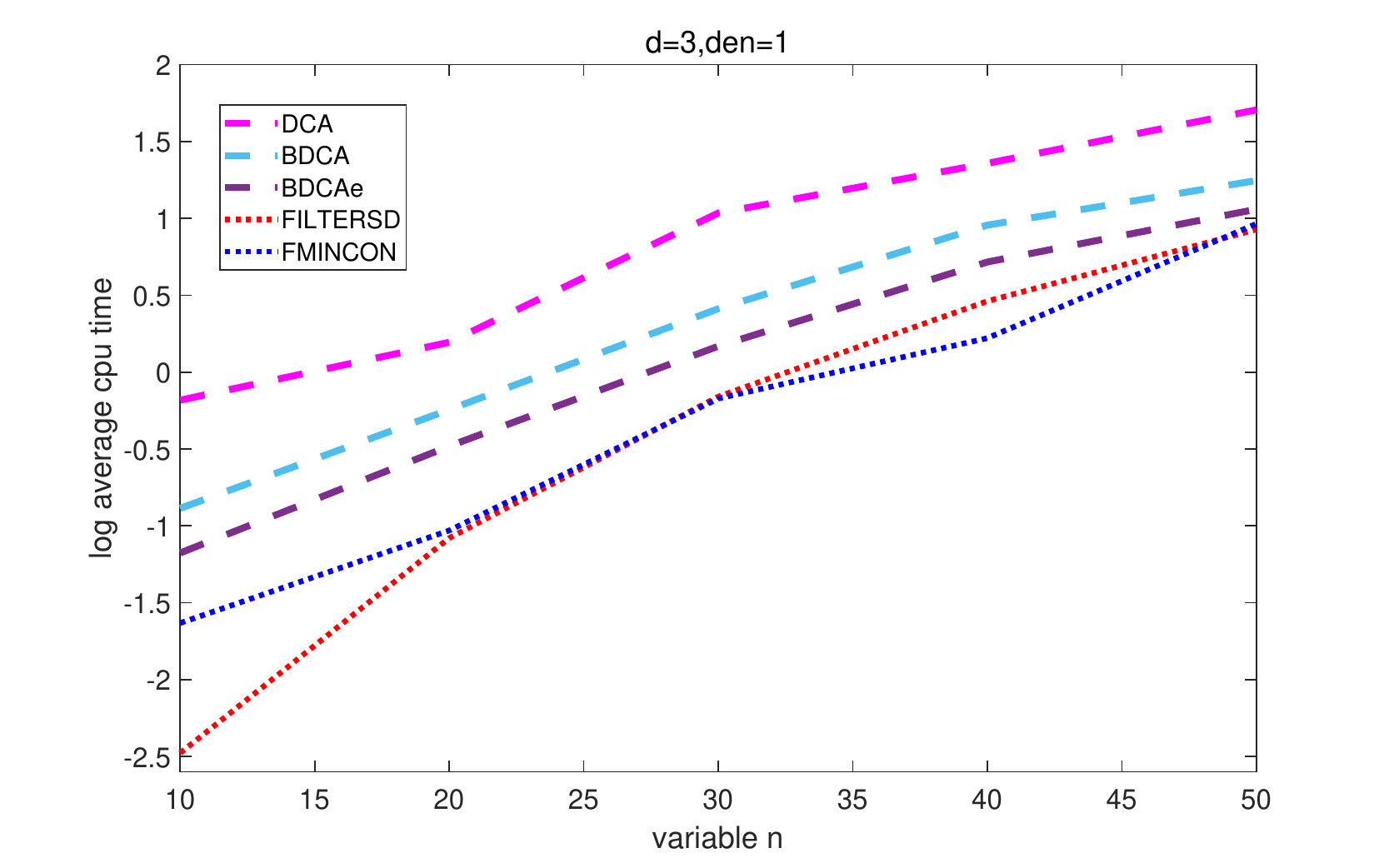}\label{fig:d31}}
 	\caption{Comparison of DCA, BDCA and BDCA$_\text{e}$ applied to HD-PSDC decomposition with $\rho=1$, as well as \texttt{FILTERSD} and \texttt{FMINCON} for solving \eqref{eq:Box} with $n\in\{10,20,30,40,50\},d=3, den\in\{0.1,0.4,0.7,1\}$.}
 	\label{fig:d-3}
\end{figure} 
\begin{figure}[ht!]
 	\centering
 	\subfigure{
 	\includegraphics[width=0.48\linewidth]{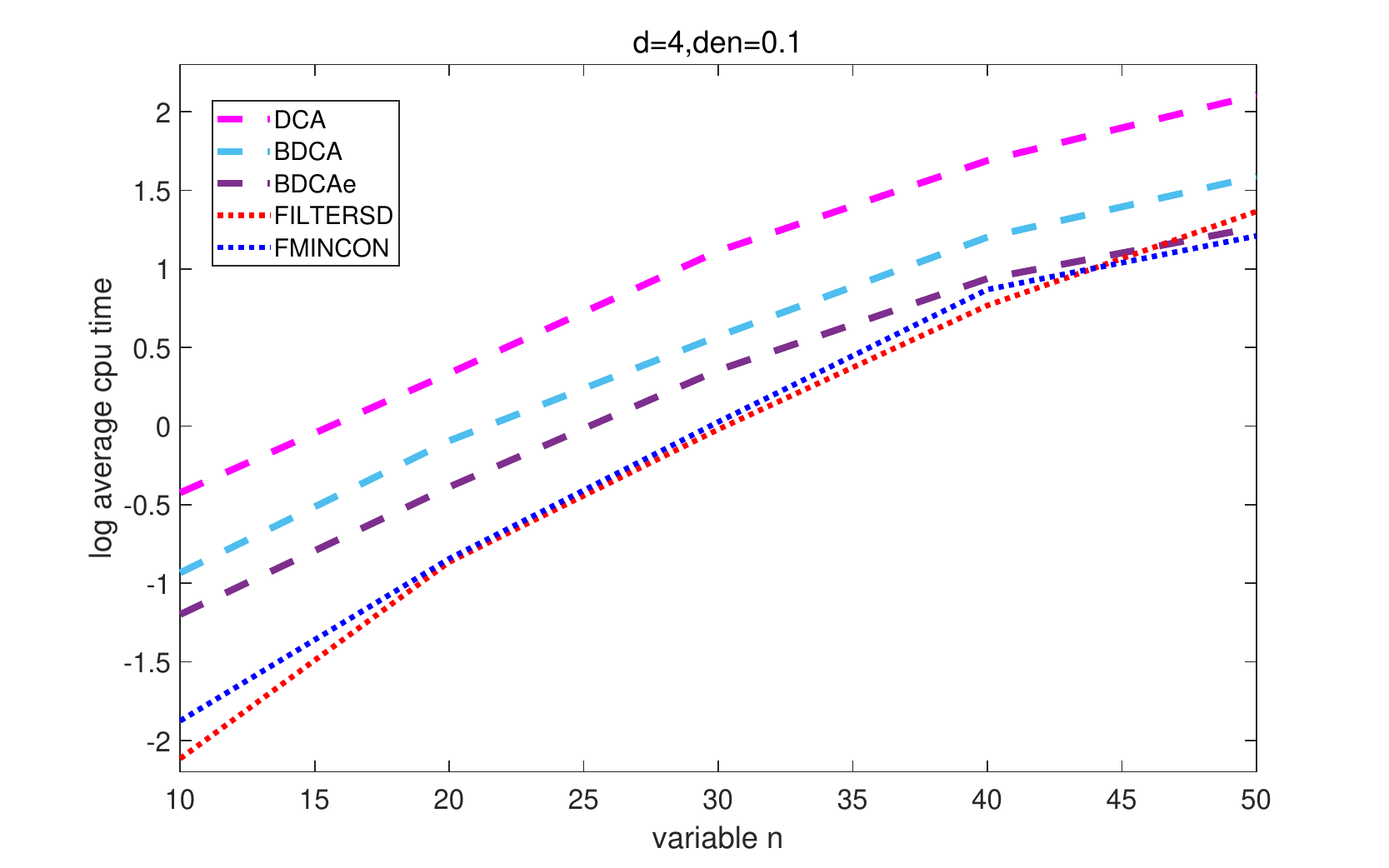}
 	\label{fig:d401}}
 	\subfigure{
 	\includegraphics[width=0.48\linewidth]{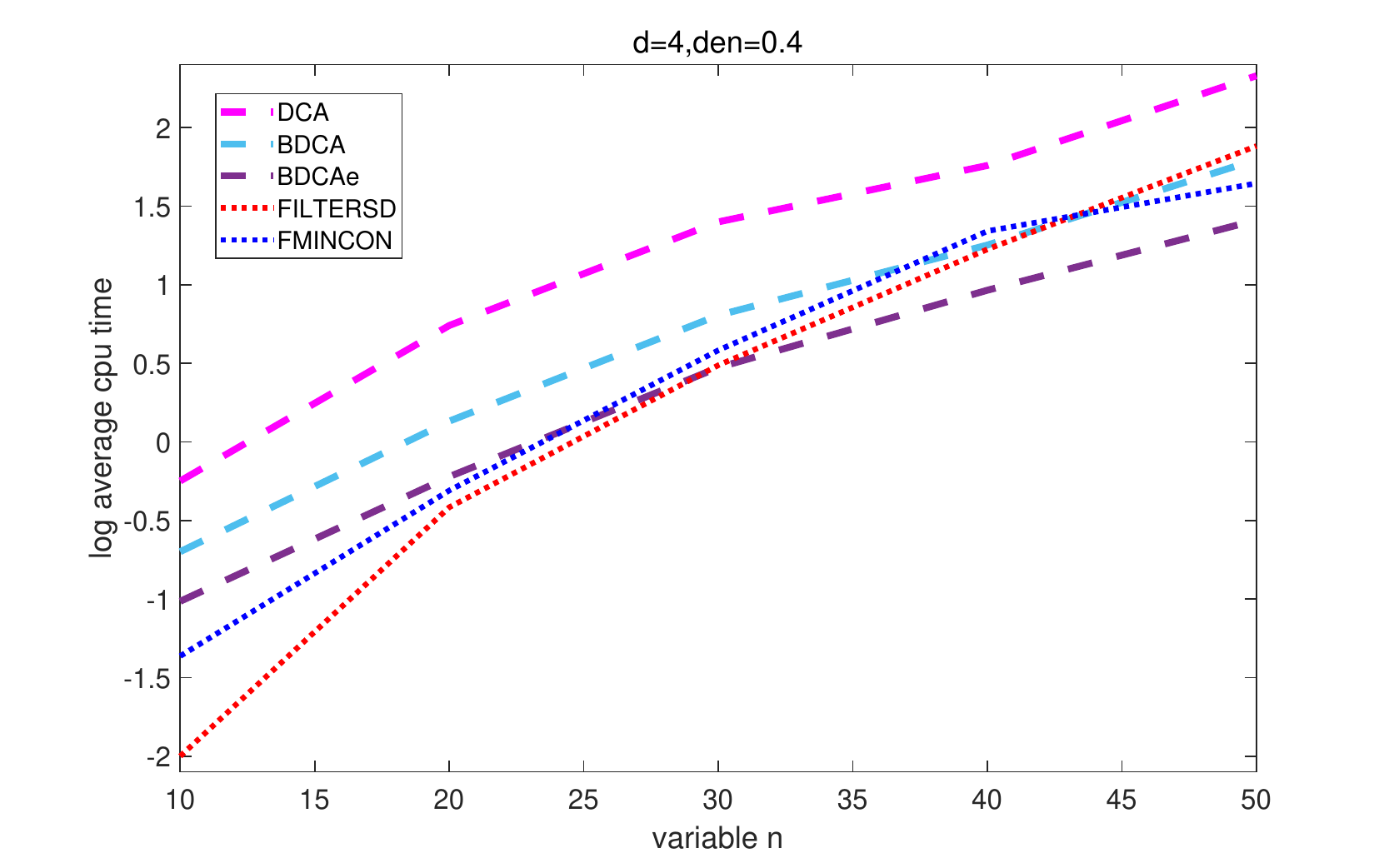}\label{fig:d404}}
 	\subfigure{
 	\includegraphics[width=0.48\linewidth]{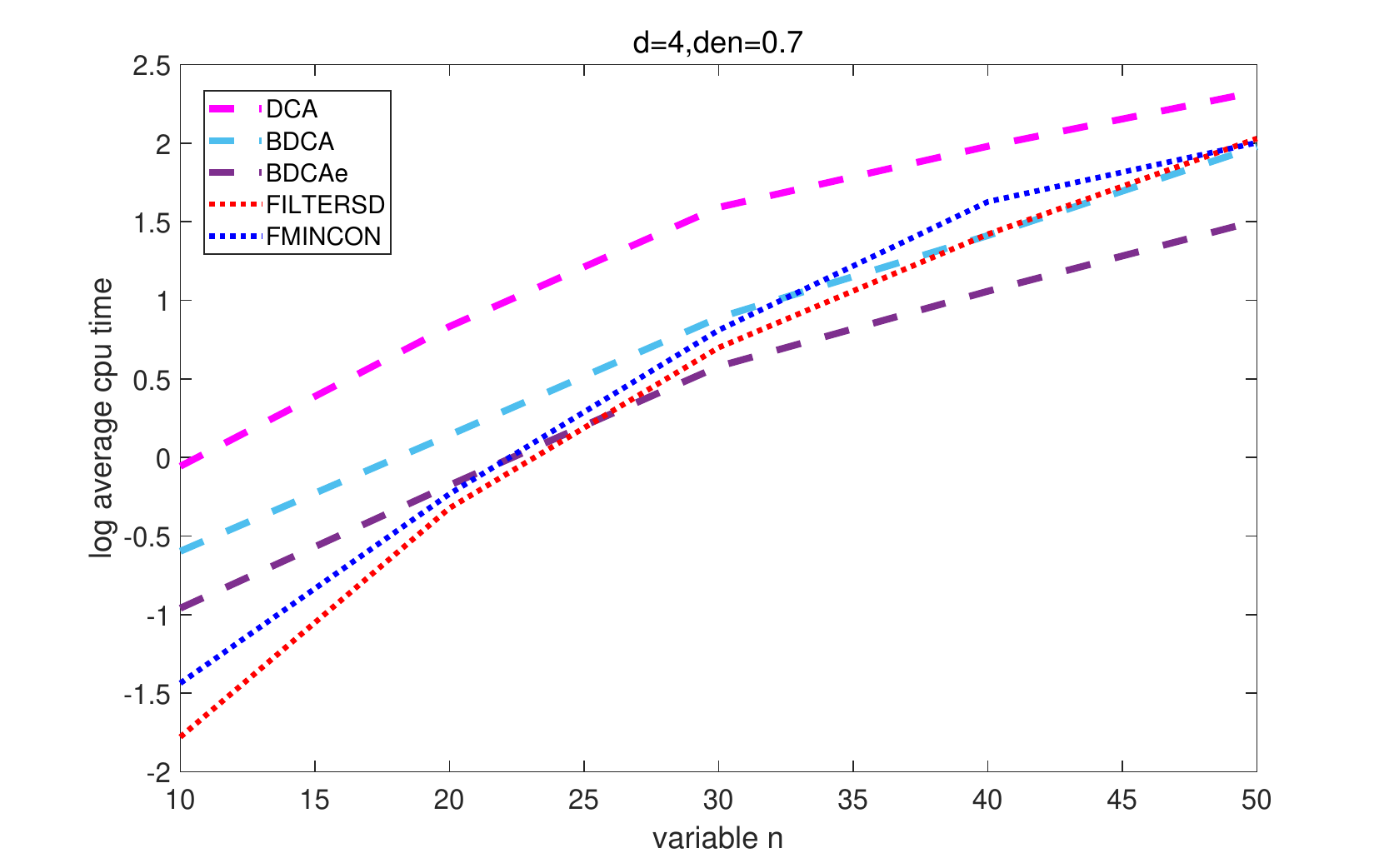}
 	\label{fig:d407}}
 	\subfigure{
 	\includegraphics[width=0.48\linewidth]{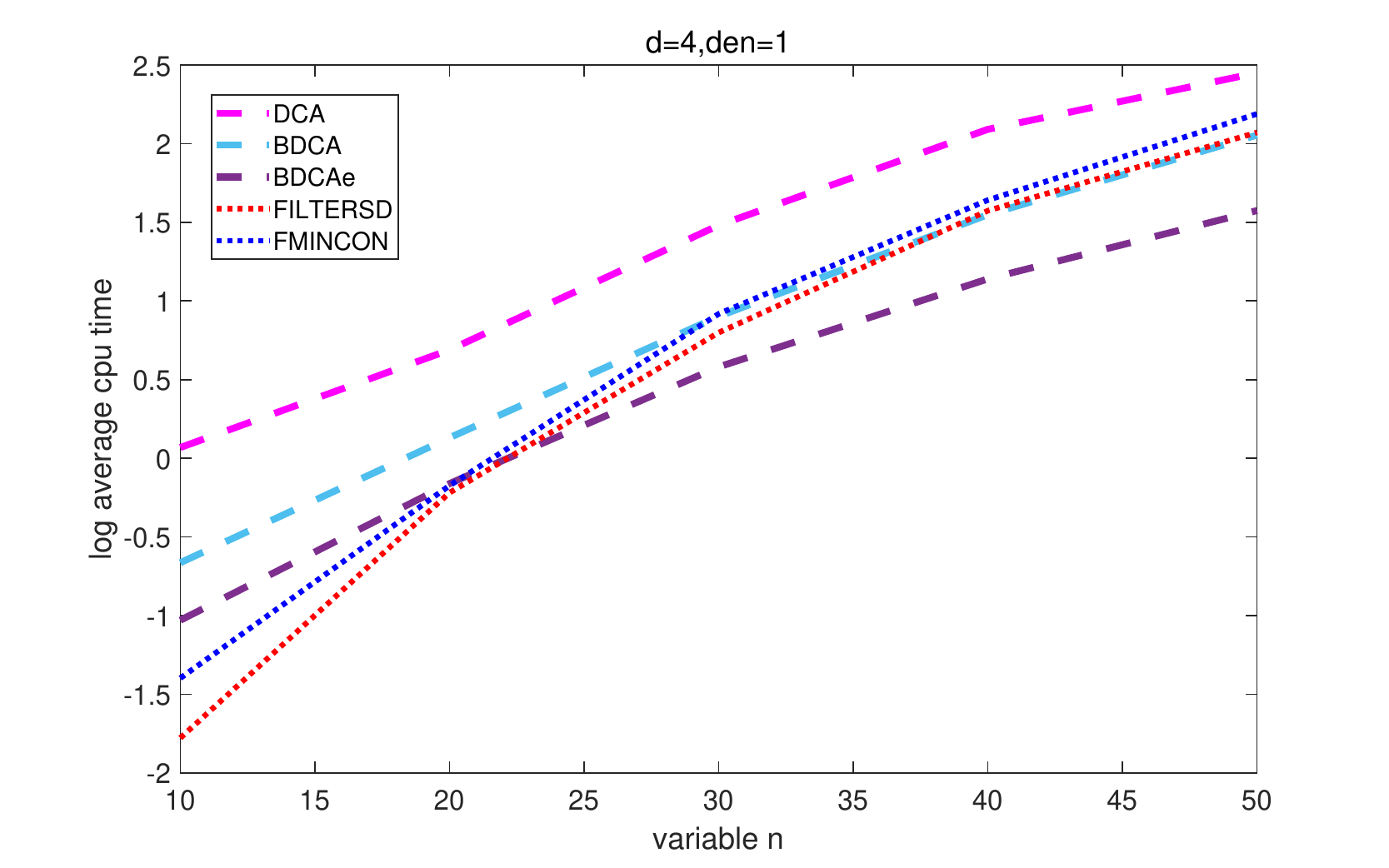}\label{fig:d41}}
 	\caption{Comparison of DCA, BDCA and BDCA$_\text{e}$ applied to HD-PSDC decomposition with $\rho=1$, as well as \texttt{FILTERSD} and \texttt{FMINCON} for solving \eqref{eq:Box} with $n\in\{10,20,30,40,50\},d=4, den\in\{0.1,0.4,0.7,1\}$.}
 	\label{fig:d-4}
\end{figure} 
Figures \ref{fig:d-3} and \ref{fig:d-4} show the performance of the five compared methods with $d=3$ and $4$ for all $n$ and $den$. One can see that, our BDCA$_\text{e}$ always outperforms DCA and BDCA in terms of the log average CPU time. In particular, for even $d$ as $d=4$ and for each $den$, BDCA$_\text{e}$ improves with respect to \texttt{FILTERSD} and \texttt{FMINCON} when $n$ increases, and for $den\geq 40\%$, BDCA$_\text{e}$ always outperforms \texttt{FILTERSD} and \texttt{FMINCON} when $n\geq 30$. However, when $d$ is odd (e.g., $d=3$), regardless of the values of $n$ and $den$, \texttt{FILTERSD} and \texttt{FMINCON} always outperform the others in terms of the log average CPU time. This is likely due to that the proposed power-sum DC decompositions transform an odd-degree polynomial into an even-degree polynomial, potentially increasing the overall computational time when solving \eqref{eq:Box}. Furthermore, we observe that BDCA$_\text{e}$ showcases reductions in log average CPU time compared to both \texttt{FILTERSD} and \texttt{FMINCON} as $d$ increases. This may be attributed to the fact that the computational cost required in \texttt{FILTERSD} and \texttt{FMINCON} escalates more rapidly than the expense incurred in solving a sequence of convex subproblems by leveraging the power-sum structure as the degree $d$ increases.  Additionally, it's noteworthy that BDCA$_\text{e}$ achieves objective values nearly identical to those of other methods across all tested scenarios.

Table \ref{tab:t_tablebox}-\ref{tab:hd_tablebox} summarizes the numerical results of BDCA$_\text{e}$, DCA, BDCA (with T-PSDC and HD-PSDC decompositions, respectively), \texttt{FILTERSD} and \texttt{FMINCON} with $n\in\{10,20,30,40,50\}$, $d=4$ and $den\in[0.5,1]$ for \eqref{eq:Box}. Here, we generate for each $n$ four instances with a random density in $[0.5,1]$. We observe that DCA, BDCA and BDCA$_\text{e}$ for \eqref{eq:HD-PSDC} consistently outperform the corresponding methods for \eqref{eq:T-PSDC}. Specifically, BDCA$\text{e}$ exhibits nearly identical average CPU time for both \eqref{eq:T-PSDC} and \eqref{eq:HD-PSDC}, and remarkably, they are the fastest methods (at least twice faster than the other methods in terms of the average CPU time). Furthermore, in about half of the examined scenarios, BDCA$_\text{e}$ for both PSDC decompositions achieves the best objective values. Fig. \ref{fig:log_cpu_time_box} depicts the average CPU time on a logarithmic scale as a function of $n$. Specifically, it shows the average CPU time over the four generated instances for each variable count, as presented in Table \ref{tab:t_tablebox}-\ref{tab:hd_tablebox}. We can see that BDCA$_\text{e}$ for both two PSDC decompositions gives the best performance when $n>25$, which confirms again that BDCA$_\text{e}$ performs better than the others for large-scale cases.

\begin{sidewaystable}[htbp]
 \centering
\resizebox{0.9\linewidth}{!}{
 \begin{tabular}{c|c|c|*{3}{c}|*{3}{c}|*{3}{c}}
 \toprule
 \multirow{3}*{}&\multirow{3}*{$n$}&\multirow{3}*{$den$}&\multicolumn{9}{c}{Algorithms for solving \eqref{eq:T-PSDC}}\\
 \cline{4-12}&&&\multicolumn{3}{c|}{DCA}&\multicolumn{3}{c|}{BDCA}&\multicolumn{3}{c}{BDCA$_\text{e}$}\\
\cline{4-12}&&&iter&time&obj&iter&time&obj&iter&time&obj\\
\midrule
1&10&0.98&275&1.88&\textbf{-2.622e+01}&31&0.35&\textbf{-2.622e+01}&17&0.14&\textbf{-2.622e+01}\\
2&10&0.96&154&1.19&\textbf{-3.097e+01}&33&0.45&\textbf{-3.097e+01}&16&0.17&\textbf{-3.097e+01}\\
3&10&0.59&57&0.83&\textbf{-3.983e+01}&19&0.38&\textbf{-3.983e+01}&11&0.22&\textbf{-3.983e+01}\\
4&10&0.77&146&1.27&-2.494e+01&24&0.34&-2.494e+01&11&0.16&\textbf{-2.495e+01}\\
5&20&0.92&454&6.43&-1.262e+02&71&1.94&-1.265e+02&36&0.84&-1.265e+02\\
6&20&0.57&382&5.85&-1.107e+02&45&1.46&-1.107e+02&21&0.67&-1.114e+02\\
7&20&0.79&410&11.50&\textbf{-1.625e+02}&59&2.14&\textbf{-1.625e+02}&23&0.88&\textbf{-1.625e+02}\\
8&20&0.62&185&6.02&\textbf{-1.446e+02}&42&1.69&\textbf{-1.446e+02}&21&0.87&\textbf{-1.446e+02}\\
9&30&0.52&195&23.45&-3.658e+02&65&7.93&-3.660e+02&31&3.94&-3.660e+02\\
10&30&0.94&290&37.15&\textbf{-4.805e+02}&78&10.93&\textbf{-4.805e+02}&32&5.00&\textbf{-4.805e+02}\\
11&30&0.99&209&30.30&-4.446e+02&61&10.13&-4.465e+02&32&5.22&-4.465e+02\\
12&30&0.70&335&36.11&-3.578e+02&67&9.44&-3.578e+02&31&4.54&-3.578e+02\\
13&40&0.70&457&116.89&\textbf{-7.695e+02}&90&31.65&\textbf{-7.695e+02}&70&14.69&\textbf{-7.695e+02}\\
14&40&0.55&373&96.48&-6.015e+02&106&27.87&-6.106e+02&41&11.98&-6.138e+02\\
15&40&0.53&598&91.58&-5.770e+02&107&23.06&-5.871e+02&49&11.65&-5.871e+02\\
16&40&0.52&540&111.63&\textbf{-7.274e+02}&123&39.82&\textbf{-7.274e+02}&42&11.56&\textbf{-7.274e+02}\\
17&50&0.52&635&264.68&-1.018e+03&116&63.97&-1.019e+03&62&31.72&-1.292e+03\\
18&50&0.58&673&240.01&-9.967e+02&139&113.73&-1.108e+03&57&33.31&-1.108e+03\\
19&50&0.73&482&235.82&-1.337e+03&107&81.11&-1.342e+03&48&36.73&\textbf{-1.349e+03}\\
20&50&0.66&655&259.53&-1.154e+03&120&145.07&-1.154e+03&43&33.57&-1.154e+03\\
\hline
\multicolumn{3}{c|}{average}&375&78.93&&75&28.67&&35&10.39\\
\bottomrule
\end{tabular}
}
\caption{Numerical results of DCA, BDCA and BDCA$_\text{e}$ (with T-PSDC decomposition and $\rho=1$) for \eqref{eq:Box}; Bold values for best results in objective function.}\label{tab:t_tablebox}
\end{sidewaystable}

\begin{sidewaystable}[htbp]
 \centering
\resizebox{\linewidth}{!}{
 \begin{tabular}{c|c|c|*{3}{c}|*{3}{c}|*{3}{c}|*{2}{c}|*{2}{c}}
 \toprule
  \multirow{3}*{}&\multirow{3}*{$n$}&\multirow{3}*{$den$}&\multicolumn{9}{c|}{Algorithms for solving \eqref{eq:HD-PSDC}}&\multicolumn{2}{c|}{\multirow{2}*{\texttt{FILTERSD}}}&\multicolumn{2}{c}{\multirow{2}*{\texttt{FMINCON}}}\\
 \cline{4-12}&&&\multicolumn{3}{c|}{DCA}&\multicolumn{3}{c|}{BDCA}&\multicolumn{3}{c|}{BDCA$_\text{e}$}&\multicolumn{2}{c|}{}&\multicolumn{2}{c}{}\\
\cline{4-16}&&&iter&time&obj&iter&time&obj&iter&time&obj&time&obj&time&obj\\
\midrule
1&10&0.98&281&0.97&\textbf{-2.622e+01}&37&0.22&\textbf{-2.622e+01}&17&0.08&\textbf{-2.622e+01}&0.02&\textbf{-2.622e+01}&0.07&\textbf{-2.622e+01}\\
2&10&0.96&146&0.74&\textbf{-3.097e+01}&29&0.23&\textbf{-3.097e+01}&15&0.10&\textbf{-3.097e+01}&0.02&\textbf{-3.097e+01}&0.04&-2.155e+01\\
3&10&0.59&54&0.64&\textbf{-3.983e+01}&18&0.21&\textbf{-3.983e+01}&11&0.14&\textbf{-3.983e+01}&0.01&\textbf{-3.983e+01}&0.02&\textbf{-3.983e+01}\\
4&10&0.77&135&0.96&\textbf{-2.495e+01}&26&0.25&\textbf{-2.495e+01}&11&0.10&\textbf{-2.495e+01}&0.02&\textbf{-2.495e+01}&0.07&\textbf{-2.495e+01}\\
5&20&0.92&445&5.19&-1.265e+02&70&1.45&-1.265e+02&36&0.65&-1.265e+02&0.94&-1.265e+02&1.20&\textbf{-1.386e+02}\\
6&20&0.57&430&4.33&-1.107e+02&45&1.10&-1.107e+02&21&0.52&-1.114e+02&0.40&-1.114e+02&0.55&\textbf{-1.227e+02}\\
7&20&0.79&425&7.49&\textbf{-1.625e+02}&57&1.49&\textbf{-1.625e+02}&23&0.60&\textbf{-1.625e+02}&0.57&\textbf{-1.625e+02}&0.94&\textbf{-1.625e+02}\\
8&20&0.62&172&4.03&\textbf{-1.446e+02}&42&1.19&\textbf{-1.446e+02}&21&0.60&\textbf{-1.446e+02}&0.38&\textbf{-1.446e+02}&0.61&-1.390e+02\\
9&30&0.52&198&20.93&-3.660e+02&63&7.35&-3.660e+02&31&3.73&-3.660e+02&4.18&-2.958e+02&6.57&\textbf{-4.037e+02}\\
10&30&0.94&286&25.74&\textbf{-4.805e+02}&66&7.93&\textbf{-4.805e+02}&30&3.77&\textbf{-4.805e+02}&5.48&\textbf{-4.805e+02}&6.93&\textbf{-4.805e+02}\\
11&30&0.99&199&24.11&-4.446e+02&63&7.68&-4.465e+02&32&3.76&-4.465e+02&5.38&-4.465e+02&5.86&\textbf{-4.933e+02}\\
12&30&0.70&334&29.42&-3.578e+02&66&7.96&-3.578e+02&31&3.73&-3.578e+02&4.64&-3.495e+02&3.43&\textbf{-3.634e+02}\\
13&40&0.70&453&91.54&\textbf{-7.695e+02}&88&24.70&\textbf{-7.695e+02}&62&10.90&\textbf{-7.695e+02}&30.04&-7.517e+02&14.46&\textbf{-7.695e+02}\\
14&40&0.55&332&77.69&-6.014e+02&92&23.84&-6.106e+02&38&10.26&-6.138e+02&21.83&\textbf{-6.658e+02}&21.58&-6.094e+02\\
15&40&0.53&575&76.55&-5.769e+02&119&24.87&-5.871e+02&52&10.10&-5.871e+02&29.90&-5.871e+02&31.06&\textbf{-7.517e+02}\\
16&40&0.52&514&94.29&\textbf{-7.274e+02}&125&44.22&\textbf{-7.274e+02}&44&9.32&\textbf{-7.274e+02}&20.35&\textbf{-7.274e+02}&23.93&-6.843e+02\\
17&50&0.52&612&218.13&-1.014e+03&129&71.44&-1.019e+03&61&25.03&\textbf{-1.295e+03}&86.10&-9.023e+02&42.68&-1.145e+03\\
18&50&0.58&386&248.07&-1.108e+03&96&88.39&-1.108e+03&52&33.27&-1.108e+03&110.35&-1.108e+03&136.60&\textbf{-1.150e+03}\\
19&50&0.73&489&206.95&-1.337e+03&114&69.15&\textbf{-1.349e+03}&57&34.30&\textbf{-1.349e+03}&118.62&-1.203e+03&80.83&-1.276e+03\\
20&50&0.66&494&206.30&-1.154e+03&105&74.94&-1.154e+03&55&29.47&-1.154e+03&111.47&-1.223e+03&83.72&\textbf{-1.275e+03}\\
\hline
\multicolumn{3}{c|}{average}&348&67.20&&73&22.93&&35&9.02&&27.53&&23.06&\\
\bottomrule
\end{tabular}
}
\caption{Numerical results of DCA, BDCA and BDCA$_\text{e}$ (with HD-PSDC decomposition and $\rho=1$), as well as \texttt{FILTERSD} and \texttt{FMINCON} for \eqref{eq:Box}; Bold values for best results in objective function.}\label{tab:hd_tablebox}
\end{sidewaystable} 
\begin{figure}[ht!]
	\centering
	\includegraphics[width=10cm,height=6cm]{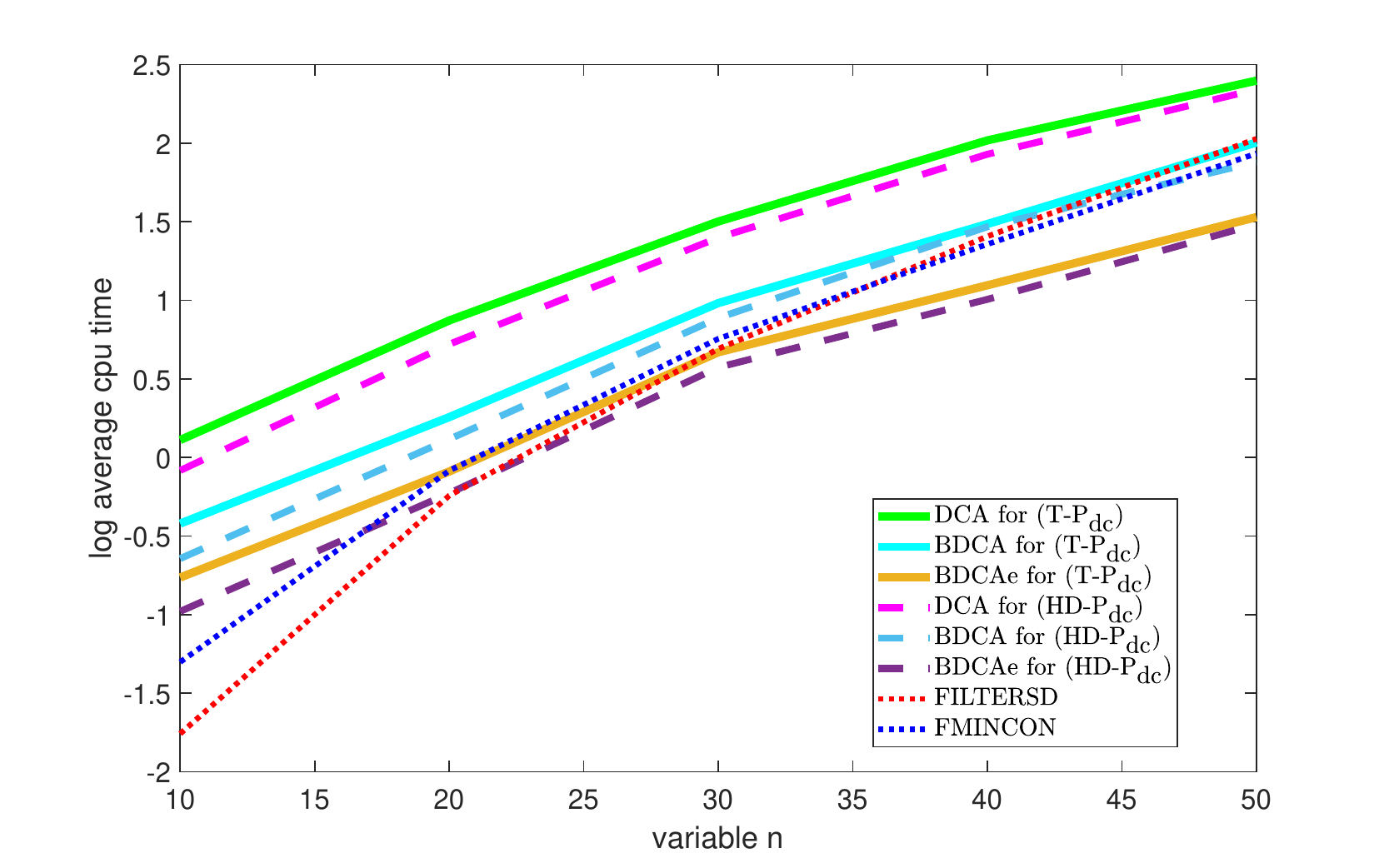}
	\caption{Log average CPU time along $n$ for results of DCA, BDCA and BDCA$_\text{e}$ for \eqref{eq:T-PSDC} and \eqref{eq:HD-PSDC} as well as \texttt{FILTERSD} and \texttt{FMINCON} reported in Table \ref{tab:t_tablebox} and Table \ref{tab:hd_tablebox}.}
	\label{fig:log_cpu_time_box}
\end{figure}

\section{Conclusions and future work}\label{sec:conclusion}
In this paper, we presented a novel DC-SOS decomposition technique based on the power-sum representation for polynomials. Then we proposed an accelerated DCA (BDCA$_\text{e}$) combining DCA with an exact line search along the DC descent direction for solving linearly constrained polynomial optimization problems. We proved that any limit point of the sequence $\{\x^k\}$ generated by BDCA$_\text{e}$ is a critical point of \eqref{eq:POPL} and established the convergence rate of the sequence $\{f(\x^k)\}$ under the Kurdyka-{\L}ojasiewicz assumption. We applied the FDPG method by exploiting the power-sum-DC structure for efficiently solving the convex subproblems required in BDCA$_{e}$. The numerical experiments on the higher-order moment portfolio optimization model \eqref{eq:MVSK} and the box-constrained polynomial optimization problem \eqref{eq:Box} have demonstrated that BDCA$_\text{e}$ is often comparable to \texttt{FILTERSD} and \texttt{FMINCON} in terms of the CPU time and objective values, and outperforms DCA, UDCA, BDCA and UBDCA on the tested dataset. Moreover, BDCA$_\text{e}$ shows advantages in solving large-scale dense polynomial optimization problems. It's worth noting that there are three major benefits for our proposed power-sum DC decompositions. First, they provide good DC decompositions for polynomials which can be verified in numerical results that DCA with power-sum DC decompositions requires fewer iterations than DCA with the classical projective DC decomposition. Second, the convex subproblem in BDCA$_\text{e}$ can be solved efficiently in parallel by FDGP as discussed in Section \ref{sec:FDPG}, which cannot be the case without the particular power-sum structure. Third, the exact line search in Section \ref{sec:DCAe} amounts to finding the roots of a univariate polynomial whose coefficients are computed explicitly based on our power-sum decompositions, leading to efficient exact line search in some real-world applications.

Several pivotal questions warrant further exploration. First, the generation of power-sum DC decompositions with minimal square terms arises as a notable inquiry due to its crucial influence on the efficiency of DCA and its variants.
Second, the pursuit of an adaptive methodology for updating the strongly convex parameter  $\rho$ remains essential to ensure robust numerical performance in BDCA$_{e}$. It's worth noting the intrinsic trade-off regarding the choice of modulus \(\rho\) concerning the overall performance of BDCA\(_\text{e}\). A diminutive modulus \(\rho\) engenders a superior DC decomposition, thereby reducing the requisite iterations for DCA\(_\text{e}\); however, as articulated by Lemma \ref{lem:convergence-rate-xl}, a magnified \(\rho\) accelerates the convergence of FDPG in solving the convex subproblem. Therefore, the quest for an optimal \(\rho\) to assure peak performance is imperative.

\begin{acknowledgements}
We extend our sincere gratitude to the anonymous referees for their insightful suggestions which significantly improved the quality of this manuscript.
\end{acknowledgements}

\begin{myblock}{\fundingname}
This work was supported by the Natural Science Foundation of China (Grant No: 11601327).    
\end{myblock}

\begin{myblock}{Data Availability}
All data generated and analysed during this study are available from the corresponding author on reasonable request. 
\end{myblock}

\begin{myblock}{Conflict of Interest}
The authors declare that they have no conflict of interest. 
\end{myblock}

\bibliographystyle{spmpsci}     
\bibliography{mybibfile}   
\end{document}